\numberwithin{equation}{section}
\theoremstyle{plain}
\newtheorem{theorem}[equation]{Theorem}
\newtheorem{corollary}[equation]{Corollary}
\newtheorem{lemma}[equation]{Lemma}
\newtheorem{proposition}[equation]{Proposition}
\theoremstyle{definition}
\newtheorem{definition}[equation]{Definition}
\newtheorem{remark}[equation]{Remark}
\newcommand{\R}{{\mathbb R}}
\newcommand{\N}{{\mathbb N}}
\newcommand{\Om}{\Omega}
\providecommand{\vint}[1]{\mathchoice
	{\mathop{\vrule width 5pt height 3 pt depth -2.5pt
			\kern -9pt \kern 1pt\intop}\nolimits_{\kern -5pt{#1}}}
	{\mathop{\vrule width 5pt height 3 pt depth -2.6pt
			\kern -6pt \intop}\nolimits_{\kern -3pt{#1}}}
	{\mathop{\vrule width 5pt height 3 pt depth -2.6pt
			\kern -6pt \intop}\nolimits_{\kern -3pt{#1}}}
	{\mathop{\vrule width 5pt height 3 pt depth -2.6pt
			\kern -6pt \intop}\nolimits_{\kern -3pt{#1}}}}
\newcommand{\eps}{\varepsilon}
\newcommand{\loc}{\mathrm{loc}}
\newcommand{\BV}{\mathrm{BV}}
\newcommand{\ch}{\text{\raise 1.3pt \hbox{$\chi$}\kern-0.2pt}}
\newcommand{\mres}{\mathbin{\vrule height 2ex depth 2.2pt width
		0.12ex\vrule height -0.3ex depth 2.2pt width .5ex}}
\DeclareMathOperator{\AM}{AM}
\DeclareMathOperator{\capa}{Cap}
\DeclareMathOperator{\dive}{div}
\DeclareMathOperator{\dist}{dist}
\DeclareMathOperator{\diam}{diam}
\DeclareMathOperator{\Var}{Var}
\DeclareMathOperator{\osc}{osc}
\begin{document}
\title[]{Alberti's rank one theorem and quasiconformal mappings
	 in metric measure spaces
}
\author{Panu Lahti}
\address{Panu Lahti,  Academy of Mathematics and Systems Science, Chinese Academy of Sciences,
	Beijing 100190, PR China, {\tt panulahti@amss.ac.cn}}

\subjclass[2020]{30L99, 30L10, 26B30}
\keywords{Alberti's rank one theorem, function of bounded variation, quasiconformal mapping,
Ahlfors regular metric measure space}

\begin{abstract}
	We investigate a version of Alberti's rank one theorem in Ahlfors regular metric spaces,
	as well as a connection with quasiconformal mappings.
More precisely, we give a proof of the rank one theorem that partially
follows along the usual steps,
but the most crucial step consists in showing for
$f\in\BV(X;Y)$ that at $\Vert Df\Vert^s$-a.e. $x\in X$, the mapping $f$
``behaves non-quasiconformally''.
\end{abstract}

\date{\today}
\maketitle

\section{Introduction}

Alberti's rank one theorem \cite{Alb} states that for a function of bounded variation
$f\in \BV(\R^n;\R^k)$,
with $n,k\in\N$, $\tfrac{dDf}{d|Df|}(x)$ has rank one for $|D^sf|$-a.e. $x\in\R^n$.
Other proofs and generalizations have been given in \cite{DLe, DPhRi, MaVi}.
It is well known that proving the rank one theorem can be reduced to the case of
functions $f\in \BV(\R^2;\R^2)$; note that 
a nonzero $2\times 2$-matrix having rank one is equivalent to it not having full rank.

Quasiconformal mappings are defined as homeomorphisms $f\in W_{\loc}^{1,n}(\R^n;\R^n)$, $n\ge 2$,
for which $|\nabla f(x)|\le K|\det \nabla f(x)|$ for a.e. $x\in\R^n$ and some constant
$K<\infty$. In particular, $\nabla f$ has full rank a.e.
Heuristically, the rank one theorem thus states that at $|D^sf|$-a.e. $x\in\R^n$,
$f$ behaves in a ``non-quasiconformal'' way.
Quasiconformal mappings can also be defined between
two metric spaces $(X,d)$ and $(Y,d_Y)$. For a mapping $f\colon X\to Y$,
for every $x\in X$ and $r>0$ one defines
\[
L_f(x,r):=\sup\{d_Y(f(y),f(x))\colon d(y,x)\le r\}
\]
and
\[
l_f(x,r):=\inf\{d_Y(f(y),f(x))\colon d(y,x)\ge r\},
\]
and
\[
H_f(x,r):=\frac{L_f(x,r)}{l_f(x,r)};
\]
we interpret this to be $\infty$ if the denominator is zero.
Then one defines
\[
	h_f(x):=\liminf_{r\to 0} H_f(x,r).
\]
A homeomorphism $f\colon X\to Y$ is said to be (metric) \emph{quasiconformal} if there is a number
$1\le H<\infty$ such that $h_f(x)\le H$ for all $x\in X$.
In Euclidean spaces and sufficiently regular metric measure spaces,
this is equivalent with the earlier mentioned ``analytic'' definition,
see  e.g. \cite[Theorem 9.8]{HKST}.

All of this raises the question of whether one can formulate a precise connection
between quasiconformal mappings and Alberti's rank one theorem, and whether a formulation of the latter
exists in metric measure spaces.
In this paper we will
give a proof of the rank one theorem that partially follows along the usual steps, but the
most crucial step is proved in very general metric measure spaces using
a notion of quasiconformality.
To this end, for a mapping $f\colon X\to Y$ we define a variant of $h_f$ by
\begin{equation}\label{eq:hf prime def}
h_f'(x):=\liminf_{r\to 0,\,d(y,x)<r}\frac{L_f(y,5r)}{l_f(y,r)},\quad x\in X.
\end{equation}

Given a BV function $f=(f_1,f_2)\in\BV(\R^2;\R^2)$, for every Lebesgue point $x$ of
$\tfrac{dDf}{d|Df|}$ with respect
to $|Df|$ we have that $\tfrac{dDf}{d|Df|}$ is close to a constant in a small neighborhood of $x$.
Intuitively, for this reason it is sufficient to consider the following type of $f$.
Let $C_1,C_2$ be closed cones with $C_1\cap C_2=\{0\}$ and $-C_1\cap C_2=\{0\}$,
and suppose $f'_1,f'_2\in\BV(\Om)$ with $\tfrac{dDf'_{1}}{d|Df'_1|}(x)\in C_1$
for $|Df'_1|$-a.e. $x\in \Om$
and $\tfrac{dDf'_{2}}{d|Df'_2|}(x)\in C_2$ for $|Df'_2|$-a.e. $x\in \Om$.
Then let $f_1(y):=f'_1(y)+v_1\cdot y$, $f_2(y):=f'_2(y)+v_2\cdot y$, and $f=(f_1,f_2)$.
Denote by $f^*$ the so-called precise representative of $f$; definitions will
be given in Section \ref{sec:prelis}.

\begin{theorem}\label{thm:main Euclidean}
	Let $\Om\subset \R^2$ be open and convex, and suppose
	$f\in \BV(\Om;\R^2)$ is as above. Then $f^*$ is injective, and for $|Df|$-a.e. $x\in \Om$,
	if $h'_{f^*}(x)=\infty$ then $\tfrac{dDf}{d|Df|}(x)$ has rank one.
\end{theorem}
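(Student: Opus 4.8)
The plan is to prove the two assertions in turn, using that the cone constraints on $Df'_1,Df'_2$, together with the linear terms $v_i\cdot y$, force \emph{uniform} strict monotonicity of $f_i^*$ along a full cone of directions; I may and do assume (this is the relevant situation after the reduction to a Lebesgue point of $\tfrac{dDf}{d|Df|}$) that each $C_i$ is a narrow convex cone and $v_i\in\inte C_i$. First I would establish injectivity and, simultaneously, a lower bound for $l_{f^*}$. Set $C_i^*=\{w\in\R^2:w\cdot\xi\ge0\text{ for all }\xi\in C_i\}$. From $\tfrac{dDf'_i}{d|Df'_i|}\in C_i$ one gets $w\cdot Df'_i\ge0$ as a measure for every $w\in C_i^*$, so $w\cdot Df_i=w\cdot Df'_i+(v_i\cdot w)\mathcal L^2\ge(v_i\cdot w)\,\mathcal L^2$, and $v_i\in\inte C_i$ gives $v_i\cdot w\ge c_i>0$ uniformly over unit $w\in C_i^*$. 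By one-dimensional BV slicing, $f_i^*$ is thus strictly increasing with rate at least $c_i$ along every line with direction in $C_i^*$, and strictly decreasing along those with direction in $-C_i^*$. The transversality hypotheses are equivalent to $C_1$ and $C_2$ being disjoint as subsets of the projective line, and an elementary computation shows that this is in turn equivalent to $(\pm C_1^*)\cup(\pm C_2^*)=\R^2\setminus\{0\}$. Hence for $x\ne y$ in $\Omega$ the segment $[x,y]\subset\Omega$ has direction in $\pm C_1^*$ or $\pm C_2^*$, so $f_1^*$ or $f_2^*$ already separates $x$ and $y$: $f^*$ is injective. The same estimate yields $l_{f^*}(y,r)\ge\min(c_1,c_2)\,r$ for every $y\in\Omega$, $r>0$, so the denominator in $h'_{f^*}$ is never an obstruction.

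For the second assertion I argue by contraposition, fixing a $|Df|$-Lebesgue point $x$ of $M:=\tfrac{dDf}{d|Df|}$ at which $M$ has rank two; since $y=x$ is admissible in the definition of $h'_{f^*}$, it is enough to bound $\liminf_{r\to0}L_{f^*}(x,5r)/l_{f^*}(x,r)$. Writing the rows of $M$ as $\eta_1,\eta_2$ and $\theta_i=\eta_i/|\eta_i|$, and noting that $D^af_i$, $D^jf_i$ and $D^cf_i$ all have direction in $C_i\cup\{0\}$ (here $D^jf_i=D^jf'_i$, $D^cf_i=D^cf'_i$, and $\nabla f_i=\nabla f'_i+v_i\in\inte C_i$), rank two forces $\eta_1,\eta_2\ne0$ with $\theta_1\in C_1$, $\theta_2\in C_2$, whence $\theta_1\nparallel\theta_2$ by transversality; moreover $\eta_1,\eta_2\ne0$ makes $|Df_1|$, $|Df_2|$ and $|Df|$ mutually comparable near $x$. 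I then claim $|Df|(B(x,r))/r^2\to|\nabla f(x)|<\infty$. As $x$ is a $|Df|$-Lebesgue point, the only alternative is $|Df|(B(x,r))/r^2\to\infty$; then the singular parts dominate near $x$, so $|D^sf_1|\sim|D^sf_2|$ there. But at $x$ the component of $Df_i$ orthogonal to $\theta_i$ is negligible, $\|\theta_i^\perp\cdot Df_i\|(B(x,r))=o(|Df_i|(B(x,r)))$, and a Poincaré estimate along the fibres in direction $\theta_i^\perp$ shows that $f_i$ agrees near $x$, up to an $L^1$-error of size $o(r\,|Df_i|(B(x,r)))$, with a one-dimensional, strictly increasing BV function $\phi_i$ of the variable $\theta_i\cdot z$; hence $D^sf_i$ is carried, up to a comparably small error, by a union of lines $\{\theta_i\cdot z\in N_i\}$ with $\mathcal L^1(N_i)=0$. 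Since $\theta_1\nparallel\theta_2$, the set $\{\theta_1\cdot z\in N_1\}$ meets every line in direction $\theta_2$ in an $\mathcal L^1$-null set and is therefore $|D^sf_2|$-negligible, contradicting $|D^sf_1|\sim|D^sf_2|$. This proves the claim, so $\nabla f(x)$ exists and, being a nonzero multiple of $M$, is invertible.

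Finally I would bound $L_{f^*}(x,5r)$: a BV function monotone along a full cone of directions cannot oscillate more than its derivative allows, so joining two points of $B(x,5r)$ by two segments of length $\lesssim r$ with directions in $\pm C_i^*$ and using the monotonicity of $f_i^*$ gives $\osc_{B(x,5r)}f_i^*\lesssim|Df_i|(B(x,Cr))/r$, hence $L_{f^*}(x,5r)\lesssim|Df|(B(x,Cr))/r$. Evaluating along a sequence $r\to0$ with $|Df|(B(x,Cr))\lesssim r^2$ (available by the previous paragraph) gives $L_{f^*}(x,5r)\lesssim r$, while $l_{f^*}(x,r)\gtrsim r$; thus $h'_{f^*}(x)<\infty$, the contrapositive of the asserted implication.

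The main obstacle is the third paragraph above, together with the oscillation estimate just used: upgrading ``$Df_i$ has almost constant direction near $x$'' to quantitatively controlled one-dimensionality of $f_i$ near $x$ — with control on the \emph{singular} part of $Df_i$, not merely on an $L^1$-approximation of $f_i$ — is a local quantitative form of the classical fact that a BV function with constant gradient direction depends on one variable, and is where Alberti-type care is required; the oscillation bound likewise depends on genuine cone monotonicity rather than on any Lipschitz control of the generic BV function $f$.
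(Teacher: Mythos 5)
Your first paragraph (injectivity and the lower bound on $l_{f^*}$) is essentially sound and runs parallel to the paper's argument: one may indeed assume $a_i<1$ (enlarge the cones slightly; transversality is an open condition), and then monotonicity with a definite rate along the dual cones gives both injectivity and $l_{f^*}(y,r)\ge c\,r$. Note, however, that ``one-dimensional BV slicing'' only yields the monotonicity inequality along almost every line; since injectivity and the bound on $l_{f^*}$ are needed for \emph{every} pair of points, you still have to upgrade the a.e.\ statement to the precise representative everywhere, which is exactly what the paper does via mollification together with the defining formula \eqref{eq:precise representative}.

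The genuine gap is in your second paragraph, and it is fatal as written. The claim that at a rank-two Lebesgue point the density $|Df|(B(x,r))/r^2$ stays bounded is not an auxiliary step: it asserts that $|D^s f|$ gives no mass to rank-two points of this class of maps, i.e.\ it is (for this class) the rank one theorem itself — precisely the content that the paper deliberately outsources to Theorem \ref{thm:main metric space} and proves there by a completely different covering/AM-modulus argument. Your sketch of this claim does not close: $L^1$-closeness of $f_i$ to a monotone profile $\phi_i(\theta_i\cdot z)$ carries no information about where the \emph{measure} $D^s f_i$ sits, so the step ``hence $D^s f_i$ is carried, up to small error, by $\{\theta_i\cdot z\in N_i\}$'' is unjustified; and the subsequent inference ``a set meeting every line in direction $\theta_2$ in an $\mathcal L^1$-null set is $|D^s f_2|$-negligible'' is false as a general principle: for $f_2(z)=c(\theta_2\cdot z)$ with $c$ a Cantor function, $|D^s f_2|$ is entirely carried by a set meeting every such line in a null set. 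Turning ``almost constant gradient direction'' into structural control of the singular measure is exactly the hard Alberti-type step, as you yourself flag. By contrast, the paper's proof of this theorem needs no density information at all: it blows up at the rank-two point with the normalization $|Df|(B(x,r))/r$, uses Lemma \ref{lem:Larsen} and BV compactness to get strict convergence to an injective linear map, then uniform convergence of precise representatives on suitable circles (Theorem \ref{thm:strict and uni conv}, Proposition \ref{prop:spehere uniform conv}, via small-capacity exceptional sets), and finally the cone monotonicity to pass from the circles to the two-sided bound on $L_{f^*}(y_j,5r_js)/l_{f^*}(y_j,r_js)$; the normalization cancels in this ratio, which is also why the definition of $h'_{f^*}$ allows the centers $y_j\neq x$. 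So the route you propose would, if completed, prove far more than the statement at hand, and the missing step is the genuinely difficult one.
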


This theorem says that if $f$ is ``non-quasiconformal'' at a point, then
$\tfrac{dDf}{d|Df|}(x)$ has rank one. Thus the problem is reduced to proving such
``non-quasiconformal'' behavior. It turns out that this can be shown in very general metric measure
spaces.

\begin{theorem}\label{thm:main metric space}
	Suppose $(X,d,\mu)$ and $(Y,d_Y,\nu)$ are
	Ahlfors $Q$-regular spaces with $1<Q<\infty$, and $X$ is locally compact.
	Let $\Om\subset X$ be open, and suppose
	$f\in \BV(\Om;Y)$ is bounded and injective.
	Then $h_{f}'(x)=\infty$ for $\Vert Df\Vert^s$-a.e. $x\in \Om$.
\end{theorem}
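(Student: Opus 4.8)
The plan is to localize the statement and then derive a contradiction by trapping $l_f(y,r)$ between a lower bound that comes from injectivity plus a variation estimate and an upper bound that comes from the smallness of the absolutely continuous part. Write $G:=\{x\in\Om:h_f'(x)<\infty\}$, the increasing union over $M\in\N$ of $G_M:=\{x:\liminf_{r\to0,\,d(y,x)<r}L_f(y,5r)/l_f(y,r)<M\}$, so it suffices to show $\Vert Df\Vert^s(G_M)=0$ for every $M$. By the differentiation theory of Radon measures on the doubling (hence separable) space $X$ one has, at $\Vert Df\Vert^s$-a.e.\ $x$, that $\mu(B(x,\rho))/\Vert Df\Vert^s(B(x,\rho))\to0$ and $\Vert Df\Vert^a(B(x,\rho))/\Vert Df\Vert^s(B(x,\rho))\to0$ as $\rho\to0$; since $\mu(B(x,\rho))\asymp\rho^Q$, at such $x$ we get $\Vert Df\Vert(B(x,\rho))\asymp\Vert Df\Vert^s(B(x,\rho))$, the singular part satisfies $\Vert Df\Vert^s(B(x,\rho))/\rho^Q\to\infty$, and it dominates the absolutely continuous part on comparable balls. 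Discarding a $\Vert Df\Vert^s$-null set we may assume every $x\in G_M$ has these properties, and it is then enough to contradict the existence of one such $x$.

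Fix such an $x$ and, from the definition of $G_M$, radii $r_j\downarrow0$ with $B(y_j,5r_j)\subset\Om$ and points $y_j$ with $d(y_j,x)<r_j$ and $L_f(y_j,5r_j)\le M\,l_f(y_j,r_j)$; set $p_j:=f(y_j)$, $L_j:=L_f(y_j,5r_j)$ and $l_j:=l_f(y_j,r_j)\ge L_j/M$. Geometrically, $f(B(y_j,5r_j))\subset\overline{B}_Y(p_j,L_j)$ and $f^{-1}(B_Y(p_j,l_j))\subset B(y_j,r_j)$, i.e.\ $f$ carries the annulus $B(y_j,5r_j)\setminus B(y_j,r_j)$ into the spherical shell $\{w\in Y:l_j\le d_Y(w,p_j)\le L_j\}$, whose radii lie within a factor $M$ of each other. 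The crucial local estimate — the heart of the proof — is that this confinement, combined with injectivity of $f$ and Ahlfors $Q$-regularity of $\nu$, forces the total variation on the inner region to have Ahlfors-$Q$-regular size governed by the distortion,
\[
\Vert Df\Vert(B(y_j,2r_j))\le C(Q,M)\,l_j\,r_j^{\,Q-1}
\]
($l_jr_j^{Q-1}$ being the natural magnitude of the variation of a map whose ``derivative'' is of size $l_j/r_j$ on a ball of radius $r_j$). Since $B(x,r_j)\subset B(y_j,2r_j)$ and $\Vert Df\Vert\ge\Vert Df\Vert^s$, this gives $l_j\gtrsim_{Q,M}\Vert Df\Vert^s(B(x,r_j))/r_j^{\,Q-1}$, and in particular $l_j\gg r_j$. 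On the other hand, $l_f(y_j,r_j)$ admits an upper bound obtained by moving from $y_j$ a distance $\approx r_j$ along a direction avoiding the singular set (which has codimension at least one, so plenty of such directions remain) and estimating the displacement of $f$ by a coarea/slicing argument, which only sees the absolutely continuous part: $l_j\lesssim\Vert Df\Vert^a(B(x,3r_j))/r_j^{\,Q-1}$. Combined with the previous bound and with $\Vert Df\Vert^a$ being negligible against $\Vert Df\Vert^s$ on balls centred at $x$ (passing to a subsequence of radii along which $\Vert Df\Vert^s$ does not oscillate too wildly, possible at $\Vert Df\Vert^s$-a.e.\ $x$), we obtain $\Vert Df\Vert^s(B(x,r_j))\lesssim\Vert Df\Vert^a(B(x,3r_j))=o\bigl(\Vert Df\Vert^s(B(x,r_j))\bigr)$, absurd; hence $\Vert Df\Vert^s(G_M)=0$.

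The main obstacle is the displayed local estimate, together with the companion upper bound for $l_f$; both must be proved without a Poincaré inequality, so no isoperimetric lower bounds for perimeter are available. I expect the local estimate to follow from the coarea inequality applied to the $1$-Lipschitz slices $w\mapsto d_Y(w,q)$ with $q$ running over a $\delta$-net of $\overline{B}_Y(p_j,L_j)$: the superlevel sets of such a slice are the preimages $f^{-1}(\overline{B}_Y(q,t))$, and injectivity forces the family $\{f^{-1}(\overline{B}_Y(p_j,t)):l_j\le t\le L_j\}$ to be nested and, by Ahlfors regularity of $\nu$, $\nu$-exhaustive within $B(y_j,2r_j)$; summing the resulting perimeter estimates over the $\sim(L_j/\delta)^Q$ net points, integrating in $t$ and letting $\delta\to0$ should produce the bound of order $l_jr_j^{\,Q-1}$. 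The genuinely delicate point is to make this uniform, so that the supremum of the scalar variations reassembles into an estimate for $\Vert Df\Vert$ itself rather than for the coordinates of an isometric embedding of $Y$, and to run the slicing behind the upper bound for $l_f$ in the merely Ahlfors-regular setting; local compactness of $X$ should be what legitimizes these covering and slicing manipulations.
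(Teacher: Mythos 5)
There is a genuine gap: the ``crucial local estimate'' on which your whole argument rests, namely
\[
\Vert Df\Vert(B(y_j,2r_j))\le C(Q,M)\,l_j\,r_j^{\,Q-1},
\]
is false for injective BV mappings. A bound on the diameter of the image, even together with the distortion bound $L_f(y_j,5r_j)\le M\,l_f(y_j,r_j)$ at that one scale, does not control total variation: variation can be produced by oscillations that barely change the image, and injectivity does not exclude them. Concretely, in $X=Y=\R^2$ ($Q=2$) take the shear $f(x_1,x_2)=(\eps x_1+\phi(x_2),\,\eps x_2)$ on $B(0,5r)$, where $\phi$ is a sawtooth with $N$ teeth of amplitude $\eps r/4$. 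This map is injective and bounded, $L_f(0,5r)\lesssim \eps r$ and $l_f(0,r)\gtrsim \eps r$, so the ratio $L_f(0,5r)/l_f(0,r)$ is bounded by an absolute constant, yet $\Vert Df\Vert(B(0,2r))\gtrsim N\eps r^2$ is arbitrarily large as $N\to\infty$, while your estimate would cap it by $C\eps r^2$. The coarea/net-point scheme you sketch cannot repair this: the perimeters of the preimages $f^{-1}(\overline{B}_Y(q,t))$ are exactly the quantities that blow up in this example. The companion bound $l_j\lesssim \Vert Df\Vert^a(B(x,3r_j))/r_j^{\,Q-1}$ is also unsupported: in a general Ahlfors regular space there are no ``directions'' to move along, and even in $\R^n$ the restriction of a BV map to a generic segment sees the Cantor part as well, so avoiding a $\mu$-null carrier of $\Vert Df\Vert^s$ does not reduce matters to the absolutely continuous part. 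With the local estimate false, the contradiction at a $\Vert Df\Vert^s$-generic point never gets off the ground.

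For comparison, the paper never proves any pointwise or single-ball variation bound. For fixed $M$ and $\eps$ it covers $H\cap\{h_f'<M\}$ (with $H$ a $\mu$-null carrier of $\Vert Df\Vert^s$, placed inside an open set $U$ with $\mu(U)<\eps^{Q/(Q-1)}$) by balls $5B_j$ with the $B_j=B(y_j,r_j)$ pairwise disjoint and $L_f(y_j,5r_j)<M\,l_f(y_j,r_j)$, and sets $g:=\sum_j r_j^{-1}L_f(y_j,5r_j)\ch_{6B_j}$. Injectivity is used only through the disjointness of the image balls $B(f(y_j),l_f(y_j,r_j))$, which with boundedness of $f$ and Ahlfors regularity of $\nu$ gives $\sum_j l_j^Q\lesssim\nu(B(y_0,3R))$; Young's inequality splits $\sum_j l_jr_j^{\,Q-1}\le\eps\sum_j l_j^Q+\eps^{-1/(Q-1)}\sum_j r_j^Q$, and the last sum is small because the balls sit in $U$. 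Hence $\int_{\Om}g\,d\mu\to0$ along $\eps=1/i$, and a curve-by-curve argument (the oscillation lemmas of Section 4, combined with auxiliary functions $h_i$ from the curve-based definition of $\Vert Df\Vert$ on a small open neighborhood) assembles $2g_i+h_i$ into an admissible sequence showing that $\Vert Df\Vert$ itself vanishes on a Borel hull of $H\cap\{h_f'<M\}$. If you wish to pursue your route, you would need an entirely different mechanism in place of the displayed estimate, since it is contradicted by the example above.
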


This theorem demonstrates that the essence of the rank one theorem
is manifest also in metric measure spaces.
We do not know whether the injectivity assumption is necessary here.

As a corollary of these two theorems combined with the well-known reductions,
we obtain the rank one theorem.

\begin{corollary}\label{cor:Alberti}
Let $f\in \BV(\R^n;\R^k)$ with $n,k\in \N$.
Then $\tfrac{dDf}{d|Df|}(x)$ has rank one for $|D^s f|$-a.e. $x\in \R^n$.
\end{corollary}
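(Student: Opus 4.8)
The plan is to obtain Corollary~\ref{cor:Alberti} by feeding Theorems~\ref{thm:main Euclidean} and~\ref{thm:main metric space} into the classical reduction machinery for the rank one theorem. First I would invoke the well-known reductions (see \cite{Alb, DLe}): composing $f$ with the coordinate projections $\R^k\to\R^2$, and slicing the resulting functions over the translates of coordinate $2$-planes, reduces the statement to the case $f=(f_1,f_2)\in\BV(\R^2;\R^2)$ --- using that a nonzero matrix has rank one exactly when each of its $2\times 2$ submatrices is singular, and that $\tfrac{dDf}{d|Df|}$ is compatible with linear projections and with slicing, up to the relevant total variation densities. In this reduced situation $\tfrac{dDf}{d|Df|}(x)$ is a $2\times2$ matrix of unit Frobenius norm, so ``rank one'' just means ``not invertible''.

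Suppose, for contradiction, that $E:=\{x\in\R^2:\det\tfrac{dDf}{d|Df|}(x)\ne 0\}$ has $|D^sf|(E)>0$. Writing $\sigma_i:=\tfrac{dDf_i}{d|Df_i|}$, the rows of $\tfrac{dDf}{d|Df|}(x)$ are $\tfrac{d|Df_i|}{d|Df|}(x)\,\sigma_i(x)$, so on $E$ both densities $\tfrac{d|Df_i|}{d|Df|}$ are positive and $\sigma_1(x),\sigma_2(x)$ are linearly independent. Since $\sigma_1,\sigma_2$ and these densities are Borel, writing $E$ as a countable union and using pigeonhole I would fix unit vectors $\theta_1\ne\pm\theta_2$ and $\eta>0$, small enough that the closed cones $C_i$ generated by $\overline{B(\theta_i,\eta)}\cap S^1$ satisfy $C_1\cap C_2=\{0\}$ and $(-C_1)\cap C_2=\{0\}$, such that the set $E_\eta:=\bigl\{x\in E:\sigma_i(x)\in B(\theta_i,\eta)\text{ and }\tfrac{d|Df_i|}{d|Df|}(x)>\eta\text{ for }i=1,2\bigr\}$ still has $|D^sf|(E_\eta)>0$.

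Next I would blow up at a well-chosen point $x_0\in E_\eta$: pick $x_0$ to be at once a point of $|D^sf|$-density one for $E_\eta$, a Lebesgue point (with respect to $|Df|$) of $\sigma_1,\sigma_2$ and of the two densities, and a point at which $|Df|$ is asymptotically doubling while its absolutely continuous part is $|D^sf|$-negligible at small scales; $|D^sf|$-a.e.\ point has all of these properties. Along a suitable sequence $r_j\downarrow0$ the rescalings $y\mapsto c_{r_j}^{-1}(f(x_0+r_jy)-a_{r_j})$, for appropriate $c_{r_j}>0$ and $a_{r_j}\in\R^2$, then converge in $L^1_{\loc}(\R^2;\R^2)$ to a nonconstant $g=(g_1,g_2)$ whose derivative $Dg$ is purely singular, with $\tfrac{dDg}{d|Dg|}$ equal $|Dg|$-a.e.\ to the invertible matrix $\tfrac{dDf}{d|Df|}(x_0)$, and with $\tfrac{dDg_i}{d|Dg_i|}\in C_i$ for $|Dg_i|$-a.e.\ point throughout $\R^2$. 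Thus $g$ is exactly of the type considered in Theorem~\ref{thm:main Euclidean} on every ball; moreover, by slicing, the cone condition makes $g$ locally bounded, so on a small enough ball $B\subset\R^2$ we may also invoke Theorem~\ref{thm:main metric space} with $X=Y=\R^2$ (Ahlfors $2$-regular and locally compact). Theorem~\ref{thm:main Euclidean} gives that $g^*$ is injective on $B$; then Theorem~\ref{thm:main metric space} gives $h'_{g^*}(x)=\infty$ for $\Vert Dg\Vert^s$-a.e.\ $x\in B$, that is --- since $Dg$ is purely singular --- for $|Dg|$-a.e.\ $x\in B$; and the conditional conclusion of Theorem~\ref{thm:main Euclidean} then forces $\tfrac{dDg}{d|Dg|}(x)$ to have rank one for $|Dg|$-a.e.\ $x\in B$. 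As $|Dg|(B)>0$ yet $\tfrac{dDg}{d|Dg|}$ equals an invertible matrix $|Dg|$-a.e.\ on $B$, this is a contradiction; hence $|D^sf|(E)=0$, which is the assertion of the corollary.

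I expect the only delicate step to be the blow-up: $x_0$ must be chosen so that the limit $g$ is nonconstant, has \emph{purely singular} derivative, \emph{retains the rank-two polar}, and satisfies the cone constraints on the \emph{entire} (convex) limiting domain rather than merely on a subset --- which is precisely what makes it ``sufficient to consider [the cone-type] $f$''. Such blow-up analysis is by now classical, in the spirit of \cite{Alb, DLe}; the genuinely new content of the argument is carried entirely by Theorems~\ref{thm:main Euclidean} and~\ref{thm:main metric space}.
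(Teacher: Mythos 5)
Your overall skeleton (reduce to the plane, trap the two polar vectors $\tfrac{dDf_1}{d|Df_1|}$, $\tfrac{dDf_2}{d|Df_2|}$ in transversal cones on a set carrying singular mass, then feed the resulting cone-type map into Theorems \ref{thm:main Euclidean} and \ref{thm:main metric space}) matches the paper, but the mechanism you use to localize --- the blow-up at $x_0$ --- contains a genuine gap, and in fact cannot work as stated. You require the blow-up limit $g$ to be nonconstant with \emph{purely singular} derivative and with $\tfrac{dDg}{d|Dg|}$ equal $|Dg|$-a.e.\ to the fixed invertible matrix $L=\tfrac{dDf}{d|Df|}(x_0)$. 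But $Dg=L\mu$ with $L$ of rank two forces $\mu=c\mathcal L^2$, i.e.\ $g$ is linear and $Dg$ is absolutely continuous; this is exactly the rigidity lemma of De Lellis (\cite[Lemma 1.4]{DLe}) that the paper itself invokes inside the proof of Theorem \ref{thm:main Euclidean} to identify the blow-up limit as $h(z)=Lz$. So the three properties you demand of $g$ are mutually incompatible, and more generally pure singularity is simply not inherited under the blow-up normalization: at $|D^sf|$-a.e.\ point the rescaled measures $|Df_{r_j}|$ converge to a limit that need not be (and, when the polar has rank two, cannot be) singular with respect to $\mathcal L^2$. Since the nonexistence of such a $g$ is essentially the content of the rank one theorem itself, any argument that ``constructs'' it must fail or be circular; the contradiction you reach is produced by the construction step, not by Theorems \ref{thm:main Euclidean} and \ref{thm:main metric space}. (Two smaller points: a map whose polar lies in the cones is not yet of the form required by Theorem \ref{thm:main Euclidean} --- one must add the linear terms $v_i\cdot y$, which changes only the absolutely continuous part; and boundedness is needed for Theorem \ref{thm:main metric space}, which your ``local boundedness by slicing'' claim does not yet justify.)

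The paper avoids blow-ups entirely in this step. After the reduction of Proposition \ref{prop:plane reduction} and a truncation to $L^\infty$, it covers $|Df|$-almost all of the rank-two singular set $D$ by disjoint Vitali balls $B(x_j,r_j)$ on which, up to an $\eps$-fraction of $|Df|$, the polars of $Df_1$ and $Df_2$ lie in transversal cones $C_{1,j},C_{2,j}$. On each ball it then applies Proposition \ref{prop:standard} to manufacture \emph{bounded} auxiliary functions $f'_{1,j},f'_{2,j}$ whose polars lie in slightly larger (still transversal) cones on the \emph{whole} ball and which satisfy $|Df|\mres_{A_j}\ll|D^sf'_{i,j}|$; adding the linear terms $v_{i,j}\cdot y$ yields $f_j$ exactly of the form in Theorem \ref{thm:main Euclidean}, hence with $f_j^*$ injective, so Theorem \ref{thm:main metric space} (plus Proposition \ref{prop:two variation measures}) gives $h'_{f_j^*}=\infty$ and therefore rank one for $|D^sf_j|$-a.e.\ point. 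The rank-one conclusion is transported back to $f$ on $A_j$ through the measure $\kappa:=\min\bigl\{\tfrac{d|D^sf_{1,j}|}{d|D^sf_j|},\tfrac{d|D^sf_{2,j}|}{d|D^sf_j|}\bigr\}\,d|D^sf_j|$, which dominates $|Df|\mres_{A_j}$ and along which $\tfrac{dDf_j}{d|Df_j|}$ has rank two, forcing $\kappa(B(x_j,r_j))=0$ and hence $|D^sf|(A_j)=0$; summing over $j$ and letting $\eps\to0$ finishes the proof. If you want to keep your contradiction scheme, you would have to replace the blow-up by this kind of ``same-scale'' substitution argument via Proposition \ref{prop:standard}.
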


Quasiconformal mappings have $W_{\loc}^{1,n}(\R^n;\R^n)$-regularity;
Theorem \ref{thm:main metric space} is in the spirit of an extensive literature in the
theory of quasiconformal mappings,
whose message is that $h_f$ being sufficiently small implies that $f$ has at least some
Sobolev regularity.
See e.g. Gehring \cite{Ge62,Ge}, 
Balogh--Koskela--Rogovin \cite{BKR},
Fang \cite{Fan},
Kallunki--Koskela \cite{KaKo},
Kallunki--Martio \cite{KaMa},
Margulis--Mostow \cite{MaMo},
and Williams \cite{Wi}.
As we will discuss in some more detail at the end of the paper,
heuristically the rank one theorem thus turns out to be a special case of this quasiconformal theory.

\section{Notation and definitions}\label{sec:prelis}

In this section we introduce the notation, definitions,
and assumptions that are employed in the paper.

Throughout the paper, we consider two separable  metric measure spaces
$(X,d,\mu)$ and $(Y,d_Y,\nu)$, where $\mu$ and $\nu$ are Borel regular outer measures.
We assume $X$ to be also locally compact, and we assume that both $X$ and $Y$ consist of at least $2$ points,
that is, $\diam X>0$ and $\diam Y>0$.
In Section \ref{sec:main Euc}, we will work with $X=Y=\R^2$ equipped with the Euclidean distance
and the Lebesgue measure $\mathcal L^2$.
For $x\in X$ and $r>0$, an open ball is $B(x,r):=\{y\in X\colon d(y,x)<r\}$ 
and a closed ball is $\overline{B}(x,r):=\{y\in X\colon d(y,x)\le r\}$.
Usually we work with open balls.
We assume every ball to have nonzero and finite measure, in both spaces.
For a ball $B=B(x,r)$, we sometimes denote $2B:=B(x,2r)$.
Note that in a metric space, a ball (as a set) might not have a unique center and radius, but
when using this abbreviation we will work with balls for which these have been specified.
If a property holds outside a set of zero $\mu$-measure, we say that it holds $\mu$-a.e.

Given $1<Q<\infty$, we say that $(X,d,\mu)$ is Ahlfors $Q$-regular if
there is a constant $1\le C_A<\infty$ such that
\[
C_A^{-1}r^Q\le \mu(B(x,r))\le C_A r^Q
\]
for all $x\in X$ and $0<r<2\diam X$.

A continuous mapping from a compact interval into $X$ is said to be a rectifiable curve if it has finite length.
A rectifiable curve $\gamma$ always admits an arc-length parametrization,
so that we get a curve $\gamma\colon [0,\ell_{\gamma}]\to X$
(for a proof, see e.g. \cite[Theorem 3.2]{Haj}).
We will only consider curves that are rectifiable and
arc-length parametrized.
If $\gamma\colon [0,\ell_{\gamma}]\to X$ is a curve and
$g\colon X\to [0, \infty]$ is a Borel function, we define
\[
\int_{\gamma} g\,ds:=\int_0^{\ell_{\gamma}}g(\gamma(s))\,ds.
\]

Let $\Om\subset X$ be open. A mapping $f\colon \Omega\to Y$ is said to 
be $\mu$-measurable if $f^{-1}(W)$ is a $\mu$-measurable set for every open set $W\subset Y$.
By the Kuratowski embedding theorem, see e.g. \cite[p. 100]{HKSTbook},
the metric space $Y$ can be isometrically embedded into a Banach space $(V,\Vert \cdot\Vert_{V})$.
We understand such an embedding to be fixed.
We say that $f\in L^1(\Omega;Y)$ if $f$ is $\mu$-measurable and $\Vert f(\cdot)\Vert_{V}\in L^1(\Omega)$.

To define the class of BV mappings,
we consider the following definitions from Martio \cite{Mar}.
Given a family of curves $\Gamma$, we say that a sequence of nonnegative Borel functions $\{\rho_i\}_{i=1}^{\infty}$
is $\AM$-admissible for $\Gamma$ if
\[
\liminf_{i\to\infty}\int_{\gamma}\rho_i\,ds\ge 1\quad \textrm{for all }\gamma\in\Gamma.
\]
Then we let
\[
\AM(\Gamma):=\inf \left\{\liminf_{i\to\infty}\int_X \rho_i\,d\mu\right\},
\]
where the infimum is taken over all $\AM$-admissible sequences $\{\rho_i\}_{i=1}^{\infty}$.

\begin{definition}\label{def:BV def}
	Let $\Om\subset X$ be open.
	Given a mapping $f\colon \Om\to Y$, we say that $f$ is in the Dirichlet class
	$D^{\BV}(\Om;Y)$ if there exists a sequence of
	nonnegative functions $\{g_i\}_{i=1}^{\infty}$ that is bounded in $L^1(\Om)$, such that
	for $\AM$-a.e. curve $\gamma\colon [0,\ell_{\gamma}]\to \Om$, we have
	\[
		d_Y(f(\gamma(t_1)),f(\gamma(t_2)))
		\le \liminf_{i\to\infty}\int_{\gamma|_{[t_1,t_2]}}g_i\,ds
	\]
	for almost every $t_1,t_2\in [0,\ell_{\gamma}]$ with $t_1<t_2$.
	We also define the total variation
	\[
	\Vert Df\Vert(\Om):=\inf \left\{\liminf_{i\to\infty}\int_{\Om}g_i\,d\mu\right\},
	\]
	where the infimum is taken over sequences $\{g_i\}_{i=1}^{\infty}$ as above.
	If also $f\in L^1(\Om;Y)$, then we say that $f\in \BV(\Om;Y)$.
\end{definition}

For an arbitrary set $A\subset X$, we define
\[
\Vert Df\Vert (A):=\inf\{\Vert Df\Vert (W)\colon A\subset W,\,W\subset X
\text{ is open}\}.
\]
If $f\colon \Om\to Y$ and $\Vert Df\Vert(\Omega)<\infty$,
then $\Vert Df\Vert$ is
a Borel regular outer measure on $\Omega$, see \cite[Theorem 4.1]{Mar}.
In \cite{Mar}, real-valued functions were considered in place of
$Y$-valued mappings, but the same proofs apply almost verbatim.
See also \cite[Theorem 3.4]{Mir} for a proof of the measure property for Banach space valued BV
mappings, with a slightly different definition of the total variation. 
When $(X,d,\mu)$ is an Ahlfors $Q$-regular space,  the variation measure of $f\in \BV(\Om;Y)$
can be decomposed into the absolutely continuous and singular parts with respect to $\mu$
as $\Vert Df\Vert=\Vert Df\Vert^a+\Vert Df\Vert^s$, see e.g. \cite[p. 82]{HKSTbook}.

Next, we consider the Euclidean theory.
We follow mostly the monograph Ambrosio--Fusco--Pallara \cite{AFP}.
The $n$-dimensional Lebesgue measure is denoted by $\mathcal L^n$, with $n\ge 1$.
The $s$-dimensional Hausdorff content is denoted by $\mathcal H^s_R$, $0< R\le \infty$,
and the corresponding Hausdorff measure by $\mathcal H^s$.
We always work with the Euclidean norm $|\cdot|$ for vectors $v\in \R^n$ as well as for
matrices $A\in \R^{k\times n}$.
The inner product of $x,y\in\R^n$ is denoted by $x\cdot y$.

For an open set $\Om\subset \R^n$, we denote by
$C_c(\Om;\R^l)$ the space of continuous $\R^l$-valued functions with compact support in
$\Om$, $l\in\N$.
We denote by $\mathcal M(\Om;\R^{l})$ the Banach space of vector-valued finite Radon measures.
By finite we mean that the total variation $|\nu|(\Om)$ of $\nu\in \mathcal M(\Om;\R^{l})$ is finite.
We denote the set of positive Radon measures by $\mathcal M^+(\Om)$.
For a vector-valued Radon measure $\nu\in\mathcal M(\Om;\R^l)$ and a positive Radon
measure $\mu\in\mathcal M^+(\Om)$, we can write the Radon-Nikodym decomposition $\nu=\nu^a+\nu^s=\frac{d\nu}{d\mu}\mu+\nu^s$ of $\nu$ with respect to
$\mu$, where $\frac{d\nu}{d\mu}\in L^1(\Om,\mu;\R^l)$.
For $\nu_j,\nu \in \mathcal M(\Om;\R^l)$,
weak* convergence $\nu_j\overset{*}{\rightharpoondown}\nu$ in $\Om$ means
\[
\int_{\Om} \phi \cdot \frac{d\nu_j}{d|\nu_j|}\,d|\nu_j|
\to \int_{\Om} \phi \cdot \frac{d\nu}{d|\nu|}\,d|\nu|
\]
for all $\phi\in C_c(\Om;\R^l)$.

The following theory of BV functions is from \cite[Section 3]{AFP}.
Let $k\in\N$ and let $\Om\subset\R^n$ be an open set. 
A function
$f\in L^1(\Omega;\R^k)$ is a function of bounded variation,
denoted $f\in \BV(\Omega;\R^k)$, if its weak derivative
is an $\R^{k\times n}$-valued Radon measure with finite total variation. This means that
there exists a (unique) $Df\in\mathcal M(\Om;\R^{k\times n})$
such that for all $\varphi\in C_c^1(\Omega)$, the integration-by-parts formula
\[
\int_{\Omega}f_j\frac{\partial\varphi}{\partial y_l}\,d\mathcal L^n
=-\int_{\Omega}\varphi\,d(Df_j)_l,\quad j=1,\ldots,k,\ l=1,\ldots,n,
\]
holds.
If we do not know a priori that a function $f\in L^1_{\loc}(\Om;\R^k)$
is a BV function, then we consider
\begin{equation}\label{eq:definition of total variation}
	\Var(f,\Om):=\sup\left\{\sum_{j=1}^{k}\int_{\Om}f_j\dive\varphi_j\,d\mathcal L^n,\,\varphi\in C_c^{1}(\Om;\R^{k\times n}),
	\,|\varphi|\le 1\right\}.
\end{equation}
If $\Var(f,\Om)<\infty$, then the Radon measure $Df$
exists and $\Var(f,\Om)=|Df|(\Om)$
by the Riesz representation theorem, and $f\in\BV(\Om;\R^k)$ provided that $f\in L^1(\Om;\R^k)$.
In the case $k=1$, we denote $\BV(\Om)=\BV(\Om;\R)$.

We have now defined two versions of the total variation. The following proposition shows that they
are comparable in Euclidean spaces.

\begin{proposition}\label{prop:two variation measures}
	Let $\Om\subset \R^n$ be an open set
	and let $f\in L^1_{\loc}(\Om;\R^k)$, with $k\in\N$. Then
	\[
	C^{-1}\Vert Df\Vert(\Om)\le \Var(f,\Om)\le C\Vert Df\Vert(\Om)
	\]
	for a constant $1\le C<\infty$ depending only on $n,k$.
\end{proposition}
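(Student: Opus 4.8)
The plan is to prove both inequalities by comparing the two admissible classes of "upper gradient sequences." The key point is that the Euclidean total variation $\Var(f,\Om) = |Df|(\Om)$ (when finite) is realized, via the standard mollification/approximation for $\BV$, by the sequence $g_i := |\nabla(f*\rho_{\eps_i})|$ for a suitable sequence $\eps_i\to 0$, and these functions satisfy the defining inequality in Definition \ref{def:BV def} along every curve (since the mollifications are smooth, hence locally Lipschitz). Conversely, any sequence $\{g_i\}$ that is $\AM$-admissible for $f$ in the sense of Definition \ref{def:BV def} controls the distributional derivative of $f$, giving a bound on $\Var(f,\Om)$.

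For the inequality $\Var(f,\Om)\le C\Vert Df\Vert(\Om)$: I would take a sequence $\{g_i\}$ bounded in $L^1(\Om)$ witnessing $\Vert Df\Vert(\Om)$, and show that $f$ has distributional derivative bounded by $\liminf_i \int_\Om g_i\,d\mathcal L^n$. This is a Fubini-type argument along the lines of the characterization of Sobolev/BV functions via curves (the ``$\AM$-upper gradient'' theory): testing against a fixed $\varphi\in C_c^1(\Om;\R^{k\times n})$ with $|\varphi|\le 1$, one integrates $\sum_j f_j\dive\varphi_j$ by slicing along lines parallel to coordinate axes; on $\mathcal L^{n-1}$-a.e. such line the restriction of $f$ is $\BV$ of one variable with variation controlled by $\liminf_i\int_\gamma g_i\,ds$, because $\AM$-a.e. line carries the curve inequality (a full family of parallel segments has positive $\AM$-modulus lower bound of the right kind, or more precisely, the exceptional set has $\mathcal L^{n-1}$-null trace on almost every hyperplane direction — this is where the constant $C=C(n,k)$ enters, from summing over the $n$ coordinate directions and $k$ components). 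Integrating back and using Fatou gives $\Var(f,\Om)\le C\liminf_i\int_\Om g_i\,d\mathcal L^n$; taking the infimum over admissible sequences yields the claim.

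For the reverse inequality $\Vert Df\Vert(\Om)\le C\Var(f,\Om)$: assuming $\Var(f,\Om)<\infty$ (otherwise nothing to prove), I would use that $\BV$ functions are approximable by mollification with $\int_\Om|\nabla (f*\rho_{\eps})|\,d\mathcal L^n\to |Df|(\Om)$ as $\eps\to 0$ (on compact subsets, with the usual exhaustion and a diagonal argument to handle $\Om$). Setting $g_i := |\nabla(f_{\eps_i})|$ extended by $0$, this sequence is bounded in $L^1(\Om)$, and since each $f_{\eps_i}$ is smooth, for \emph{every} rectifiable curve $\gamma$ and all $t_1<t_2$ one has $d_Y(f_{\eps_i}(\gamma(t_1)),f_{\eps_i}(\gamma(t_2)))\le \int_{\gamma|_{[t_1,t_2]}}g_i\,ds$ (here $Y=\R^k$ with the Euclidean norm). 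To pass to $f$ itself along $\AM$-a.e.\ curve, I would use that mollifications converge to the precise representative off an exceptional set, and that the exceptional curves form an $\AM$-null family — the same slicing principle as above, now used to identify $f$ with $\lim_i f_{\eps_i}$ along $\AM$-a.e.\ curve. This gives $\Vert Df\Vert(\Om)\le \liminf_i\int_\Om g_i\,d\mathcal L^n = |Df|(\Om) = \Var(f,\Om)$, in fact with $C=1$ on this side (the stated asymmetric constant is harmless).

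The main obstacle is the passage between the pointwise/distributional picture and the curve-based definition: specifically, verifying that the exceptional curves (where either the curve inequality fails for the $\AM$-admissible sequence, or the mollifications fail to converge to $f$) constitute an $\AM$-negligible family, and quantifying how the one-dimensional slicing of $\BV$ along the $n$ coordinate directions reassembles into a bound on the full measure $|Df|$ with a dimensional constant. Once the slicing/Fubini bookkeeping is set up carefully — most of which is standard $\BV$ theory from \cite{AFP} combined with Martio's curve framework \cite{Mar} — both inequalities follow by taking infima over the respective admissible classes.
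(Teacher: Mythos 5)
Your proposal is correct in outline, but it takes a genuinely different route from the paper. The paper's proof is short and citation-based: it first reduces to the scalar case $k=1$ by the componentwise comparability of both $\Var$ and $\Vert Df\Vert$, and then invokes the known equivalence of the curve-based ($\AM$) total variation with Miranda's relaxation-type definition (Theorem 3.10 and a localized Proposition 3.3 of \cite{DCEBKS}), together with \cite[Proposition 1.1]{Mir} identifying Miranda's definition with $\Var(f,\Om)$ in Euclidean space. You instead reprove the equivalence directly: for $\Vert Df\Vert(\Om)\le \Var(f,\Om)$ you use smooth approximation ($g_i=|\nabla f_{\eps_i}|$ works since every rectifiable curve has compact image in $\Om$, hence lies in $\Om_{\eps_i}$ for large $i$, and since curves spending positive length in the $\mathcal L^n$-null non-convergence set form an $\AM$-negligible family, e.g.\ via $\rho_i=i\ch_N$), which even yields constant $1$ on that side; for $\Var(f,\Om)\le C\Vert Df\Vert(\Om)$ you slice: by Fatou, an $\AM$-admissible sequence with small mass forces $\liminf_i\int_\ell g_i\,ds$ to control, for $\mathcal L^{n-1}$-a.e.\ line $\ell$ in each coordinate direction, the essential variation of the slice of each $f_j$, and the one-dimensional sections characterization of BV (\cite[Section 3.11]{AFP}) then bounds each $|D_l f_j|(\Om)$, giving $C=nk$. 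This is essentially a self-contained reproduction of the content of the results the paper cites; what it buys is independence from \cite{DCEBKS,Mir} and an explicit constant, at the cost of length and of some technical bookkeeping you gloss over but which is routine: showing that an $\AM$-null family has $\mathcal L^{n-1}$-null trace on a.e.\ line (the Fatou argument you gesture at), decomposing $\ell\cap\Om$ into segments, and upgrading the ``a.e.\ $t_1,t_2$'' inequality to all pairs in a common full-measure parameter set (by Fubini plus a chaining through an intermediate good point) before summing over partitions to bound the essential variation.
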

\begin{proof}
	Writing $f=(f_1,\ldots,f_k)$, for all $j=1,\ldots,k$ we clearly have
	\[
	\Var(f_j,\Om)\le \Var(f,\Om)\le \Var(f_1,\Om)+\ldots+\Var(f_k,\Om).
	\]
	The analogous property is straightforward to show also for $\Vert Df\Vert(\Om)$.
	Thus it is enough to consider the case $k=1$.
	Using Theorem 3.10 of Durand-Cartagena--Eriksson-Bique--Korte--Shanmugalingam \cite{DCEBKS}
	and localizing the proof of \cite[Proposition 3.3]{DCEBKS},
	we obtain the comparability of $\Vert Df\Vert(\Om)$
	with Miranda's \cite[Definition 3.1]{Mir} definition of total variation.
	On the other hand, the latter agrees with $\Var(f,\Om)$, as noted in
	\cite[Proposition 1.1]{Mir}.
\end{proof}

We denote the characteristic function of a set $E\subset\R^n$ by $\ch_E\colon \R^n\to \{0,1\}$.
If $E\subset\R^n$ with $\Var(\ch_E,\R^n)<\infty$, we say that $E$ is a set of finite perimeter.
We have the following relative isoperimetric inequality on the plane:
for a  set of finite perimeter $E\subset \R^2$, $x\in \R^2$, and $r>0$, we have
\begin{equation}\label{eq:rel isop ineq}
	\min\{\mathcal L^2(B(x,r)\cap E),\mathcal L^2(B(x,r)\setminus E)\}
	\le r |D\ch_E|(B(x,r)).
\end{equation}
The coarea formula states that for an open set $\Om\subset \R^n$ and
a function $u\in \BV(\Om)$, we have
\begin{equation}\label{eq:coarea}
	|Du|(\Om)=\int_{-\infty}^{\infty}|D\ch_{\{u>t\}}|(\Om)\,dt.
\end{equation}
Here we abbreviate $\{u>t\}:=\{x\in \Om\colon u(x)>t\}$.

Given a $\mathcal L^n$-measurable set $D\subset \R^n$ with nonzero and finite
Lebesgue measure, and a function $f\in L^1(D)$, we denote
\[
f_D:=\vint{D}f\,d\mathcal L^n:=\frac{1}{\mathcal L^n(D)}\int_D f\,d\mathcal L^n.
\] 

Let $f\in L^1_{\loc}(\Om)$.
We will often consider a particular pointwise
representative, namely the precise representative
\begin{equation}\label{eq:precise representative}
	f^*(x):=\inf\left\{t\in\R\colon
	\lim_{r\to 0} \frac{\mathcal L^n(B(x,r)\cap \{f>t\})}{\mathcal L^n(B(x,r))}=0\right\},\quad x\in\Om.
\end{equation}
For $f\in L^1_{\loc}(\Om;\R^k)$, we also define $f^*:=(f_1^*,\ldots,f_k^*)$.
We say that $x\in\Om$ is a Lebesgue point of $f\in L^1_{\loc}(\Om;\R^k)$ if
\[
\lim_{r\to 0}\,\vint{B(x,r)}|f(y)-\widetilde{f}(x)|\,d\mathcal L^n(y)=0
\]
for some $\widetilde{f}(x)\in\R^k$. We denote by $S_f\subset\Om$ the set where
this condition fails and call it the approximate discontinuity set.
Given a unit vector $\nu\in \R^n$,  $x\in\R^n$, and $r>0$, we define the half-balls
\begin{align*}
	B_{\nu}^+(x,r)\coloneqq \{y\in B(x,r)\colon  (y-x)\cdot \nu>0\},\\
	B_{\nu}^-(x,r)\coloneqq \{y\in B(x,r)\colon  (y-x)\cdot \nu<0\}.
\end{align*}
We say that $x\in \Om$ is an approximate jump point of $f$ if there exist a unit vector $\nu\in \R^n$
and distinct vectors $f^+(x), f^-(x)\in\R^k$ such that
\[
	\lim_{r\to 0}\,\vint{B_{\nu}^+(x,r)}|f(y)-f^+(x)|\,d\mathcal L^n(y)=0
\]
and
\[
	\lim_{r\to 0}\,\vint{B_{\nu}^-(x,r)}|f(y)-f^-(x)|\,d\mathcal L^n(y)=0.
\]
The set of all approximate jump points is denoted by $J_f$.
For $f\in\BV(\Om;\R^k)$, we have that $\mathcal H^{n-1}(S_f\setminus J_f)=0$, see \cite[Theorem 3.78]{AFP}.

We write the Radon-Nikodym decomposition of the variation measure of
$f\in\BV(\Om;\R^k)$ into the absolutely continuous and singular parts with respect to $\mathcal L^n$
as $Df=D^a f+D^s f$. Furthermore, we define the Cantor and jump parts of $Df$ by
\[
D^c f\coloneqq  D^s f\mres (\Om\setminus S_f),\qquad D^j f\coloneqq D^s f\mres J_f;
\]
here
\[
D^s f \mres J_f(A):=D^s f (J_f\cap A),\quad \textrm{for } D^s f\textrm{-measurable } A\subset \Om.
\]
Since $\mathcal H^{n-1}(S_f\setminus J_f)=0$ and $|Df|$ vanishes on
$\mathcal H^{n-1}$-negligible sets, we get the decomposition (see \cite[Section 3.9]{AFP})
\[
	Df=D^a f+ D^c f+ D^j f.
\]
For the jump part, there is a unit vector $\nu_f\colon J_f\to \R^n$ such that
(note that $(f^{+}-f^-)	\otimes \nu_f $ is a $k\times n$-matrix)
\begin{equation}\label{eq:jump part representation}
	D^j f=(f^{+}-f^-)	\otimes \nu_f \mathcal H^{n-1}\mres J_f.
\end{equation}

If we have a sequence $\{f_i\}_{i=1}^{\infty}\subset \BV(\Om;\R^k)$ and $f_i\to f$ in $L^1(\Om;\R^k)$
and $|Df_i|(\Om)\to |Df|(\Om)$, then we say that $f_i\to f$ strictly in $\BV(\Om;\R^k)$.

The (Sobolev) $1$-capacity of a set $A\subset \R^n$ is defined by
\[
\capa_1(A):=\inf \Vert u\Vert_{W^{1,1}(\R^n)},
\]
where the infimum is taken over Sobolev functions $u\in W^{1,1}(\R^n)$ satisfying
$u\ge 1$ in a neighborhood of $A$.

Given $v\in\R^2$ with $|v|=1$ and $0<a\le 1$, we define a closed cone by
\[
C(v,a):=\{x\in\R^2\colon \,x\cdot v\ge a|x|\}.
\]

\section{Proof of Theorem \ref{thm:main Euclidean}}\label{sec:main Euc}

In this section we prove Theorem \ref{thm:main Euclidean}.

We start with some preliminary results.
As noted in the introduction, our proof of the rank one theorem  follows partially
along the usual steps and in particular relies on the following two propositions
that essentially reduce the problem to monotone functions $f\colon\R^2\to \R^2$;
see \cite[Proposition 1.3, Proposition 5.1]{DLe}.

\begin{proposition}\label{prop:plane reduction}
	The rank one theorem (Corollary \ref{cor:Alberti}) holds if and only if for
	every $f\in \BV(B(0,1);\R^2)$, $\tfrac{dDf}{d|Df|}(x)$ has rank one
	for $|D^s f|$-a.e. $x\in B(0,1)$.
\end{proposition}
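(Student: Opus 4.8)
The plan is to establish the two implications separately; the ``only if'' direction is a routine localization, while the ``if'' direction combines an elementary covering argument with the classical dimension reductions of \cite{DLe}.

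\emph{``Only if''.} Assume Corollary \ref{cor:Alberti}; taking $n=k=2$, the rank one property holds for every $f\in\BV(\R^2;\R^2)$. Given $f\in\BV(B(0,1);\R^2)$ and $0<\rho<1$, fix a cutoff $\eta\in C_c^\infty(B(0,1))$ with $\eta\equiv 1$ on $B(0,\rho)$. Then $\eta f$, extended by $0$, lies in $\BV(\R^2;\R^2)$: it is in $L^1(\R^2;\R^2)$ and, by the Leibniz rule for BV functions, $D(\eta f)=\eta\,Df+(f\otimes\nabla\eta)\,\mathcal L^2$ is a finite measure. Since $\eta f=f$ $\mathcal L^2$-a.e.\ on the open set $B(0,\rho)$, the distributional derivatives agree there, so $D(\eta f)\mres B(0,\rho)=Df\mres B(0,\rho)$, hence $|D^s(\eta f)|\mres B(0,\rho)=|D^sf|\mres B(0,\rho)$ and $\tfrac{dD(\eta f)}{d|D(\eta f)|}=\tfrac{dDf}{d|Df|}$ $|D^sf|$-a.e.\ on $B(0,\rho)$. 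Applying the rank one property to $\eta f$ and then letting $\rho\uparrow 1$ gives the claim for $f$.

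\emph{``If''.} Assume the planar ball statement. First I would upgrade it to the corresponding statement on all of $\R^2$: given $g\in\BV(\R^2;\R^2)$, cover $\R^2$ by countably many unit balls $B(x_m,1)$, apply the hypothesis to each translate $g(\,\cdot\,+x_m)\in\BV(B(0,1);\R^2)$, and observe that translation preserves $|D^sg|$ and the rank of $\tfrac{dDg}{d|Dg|}$; since the $B(x_m,1)$ cover $\R^2$, one concludes that $\tfrac{dDg}{d|Dg|}(x)$ has rank one for $|D^sg|$-a.e.\ $x\in\R^2$. It then remains to deduce Corollary \ref{cor:Alberti} for arbitrary $n,k$ from the case $n=k=2$, which is exactly the classical reduction of \cite[Proposition 1.3]{DLe}. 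In outline: a matrix has rank at most one iff all its $2\times 2$ minors vanish, and since $|\tfrac{dDf}{d|Df|}|=1$ $|Df|$-a.e.\ the property ``rank one'' coincides with ``rank $\le 1$''; reducing the target $\R^k$ to two components is a Radon--Nikodym bookkeeping over the countably many pairs of rows $(f_i,f_j)$ (on each Borel piece of $\{\operatorname{rank}\tfrac{dDf}{d|Df|}\ge 2\}$ on which a fixed such pair is independent and the associated $2\times n$ submatrix has norm bounded below, one has $|D^sf|\ll|D^s(f_i,f_j)|$, and that piece is $|D^s(f_i,f_j)|$-null by the planar result); reducing the domain $\R^n$ to two variables is carried out by slicing $\R^n=V\times V^{\perp}$ along each coordinate $2$-plane $V$, using that $g(\,\cdot\,,z)\in\BV(V;\R^2)$ for $\mathcal L^{n-2}$-a.e.\ $z$ and that the $V$-minor of $Dg$ disintegrates along these slices.

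\emph{Main obstacle.} The only substantive step is this last slicing/Fubini argument: one must show that ``rank $\le 1$ at $|D^sg(\,\cdot\,,z)|$-a.e.\ point of $\mathcal L^{n-2}$-a.e.\ slice'' forces the corresponding minor of $\tfrac{dDg}{d|Dg|}$ to vanish $|D^sg|$-a.e., which has to be run at the level of the individual minor measures and their disintegrations in order to account for the Cantor part of $|D^sg|$. I would follow \cite[Proposition 1.3]{DLe} here; all the remaining ingredients are the harmless passages between $\R^n$- and unit-ball domains and between $\R^k$- and $\R^2$-valued targets recorded above.
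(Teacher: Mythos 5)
Your proof is correct and takes essentially the same route as the paper, which gives no independent argument for this proposition but simply invokes the classical reduction of \cite[Proposition 1.3]{DLe} --- exactly the result you defer to for the core target/domain reduction (minors plus two-dimensional slicing). The additional pieces you supply, the cutoff localization for the ``only if'' direction and the covering/translation step upgrading the statement on $B(0,1)$ to all of $\R^2$, are routine and carried out correctly.
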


\begin{proposition}\label{prop:standard}
	Let $v\in\R^2$ with $|v|=1$ and $a>a'>0$, suppose $u\in \BV(B(x,r))\cap L^{\infty}(B(x,r))$
	for a ball $B(x,r)\subset \R^2$, and let
	\[
	A:=\left\{y\in B(x,r)\colon\, \frac{dDu}{d|Du|}(y)\in C(v,a)\right\}.
	\]
	Then there exists $w\in \BV(B(x,r))\cap L^{\infty}(B(x,r))$ such that
	$|Du|\mres_{A}\ll |Dw|$ in $B(x,r)$, and
	\[
	\frac{dDw}{d|Dw|}(y)\in C(v,a')\quad\textrm{for }|Dw|\textrm{-a.e. }y\in B(x,r).
	\]
\end{proposition}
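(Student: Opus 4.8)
The plan is to build $w$ as a one‑sided supremum of $u$ over a cone, localized along $A$. Set $K:=C(v,\sqrt{1-(a')^{2}})$; a short computation shows that $C(v,a')$ is precisely the dual cone of $K$, i.e.\ $z\in C(v,a')$ iff $z\cdot k\ge 0$ for every $k\in K$, and moreover $C(v,a)\subset C(v,a')$ because $a'<a$. Using the Lebesgue density theorem for the Radon measure $|Du|$, cover $A$ up to a $|Du|$‑null set by countably many balls $B_{j}\subset B(x,r)$ on which $\tfrac{dDu}{d|Du|}$ varies by less than a small prescribed amount about some $\nu_{j}\in C(v,a)$. Fix $M>\|u\|_{L^{\infty}}$, put $\tilde u_{j}:=(u+M)\ch_{B_{j}}\in\BV(B(x,r))\cap L^{\infty}(B(x,r))$, and define
\[
w_{j}(y):=\esssup\bigl\{\tilde u_{j}(z):z\in(y-K)\cap B(x,r)\bigr\},\qquad y\in B(x,r),
\]
with $u$ taken in a Borel representative; since $(y-K)\cap B(x,r)$ has positive measure for every $y$, $w_{j}$ is well defined and $\|w_{j}\|_{L^{\infty}}\le M+\|u\|_{L^{\infty}}$. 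Finally set $w:=\sum_{j}c_{j}w_{j}$ with $c_{j}\downarrow 0$ chosen, after the estimates below, so that $\sum_{j}c_{j}\bigl(|Dw_{j}|(B(x,r))+\|w_{j}\|_{L^{\infty}}\bigr)<\infty$.

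Next I would record the elementary properties of each $w_{j}$. If $e\in K$, $t>0$ and $y,\,y+te\in B(x,r)$, then $y-K\subset y+te-K$ since $te+K\subset K$, hence $w_{j}(y)\le w_{j}(y+te)$; thus $w_{j}$ is nondecreasing along every direction of $K$, so once $w_{j}\in\BV(B(x,r))$ is known the directional derivatives $Dw_{j}\cdot e$ are nonnegative measures for all $e$ in a countable dense subset of $K$, which gives $\tfrac{dDw_{j}}{d|Dw_{j}|}(y)\in K^{\ast}=C(v,a')$ for $|Dw_{j}|$‑a.e.\ $y$. For the membership $w_{j}\in\BV(B(x,r))$ one uses the coarea formula: up to $\mathcal L^{2}$‑null sets $\{w_{j}>t\}=\bigl((\{\tilde u_{j}>t\}\cap B(x,r))+K\bigr)\cap B(x,r)$, and in linear coordinates that straighten $K$ to a coordinate quadrant this set is monotone in each coordinate, so its boundary inside $B(x,r)$ is a monotone curve of length at most $|D\ch_{\{\tilde u_{j}>t\}}|(B(x,r))+C\diam B(x,r)$; integrating over $t$ in a bounded range and applying \eqref{eq:coarea} to $\tilde u_{j}$ gives $|Dw_{j}|(B(x,r))<\infty$. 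Assembling, $w\in\BV(B(x,r))\cap L^{\infty}(B(x,r))$; since $C(v,a')$ is a convex cone the polar of $Dw=\sum_{j}c_{j}Dw_{j}$ again lies in $C(v,a')$ $|Dw|$‑a.e.; and since $C(v,a')$ is pointed and $v\in K$, $|Dw|\ge Dw\cdot v\ge c_{j}\,Dw_{j}\cdot v\ge c_{j}a'\,|Dw_{j}|$, so $|Dw_{j}|\ll|Dw|$ for every $j$.

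It remains to prove $|Du|\mres_{A}\ll|Dw|$, which is the heart of the matter. By the last display and $A\subset\bigcup_{j}B_{j}$ up to a $|Du|$‑null set, it suffices to establish the local statements $|Du|\mres(A\cap B_{j})\ll|Dw_{j}|$; applying \eqref{eq:coarea} to both $u$ and $w_{j}$, this reduces for $\mathcal L^{1}$‑a.e.\ $t$ to showing that $\mathcal H^{1}$‑a.e.\ point of $\partial^{*}\{u>t\}\cap A\cap B_{j}$ lies in $\partial^{*}\{w_{j}>t\}$. The mechanism is a blow‑up: by the compatibility of the coarea decomposition with the polar — for $|Du|$‑a.e.\ $y$ one has $\nu_{\{u>t\}}(y)=\tfrac{dDu}{d|Du|}(y)$ for $\mathcal L^{1}$‑a.e.\ relevant $t$ — at such a point $y\in A\cap B_{j}$ the set $\{u>t\}$ blows up to a half‑plane $H=\{(\,\cdot-y)\cdot\nu>0\}$ with $\nu\in C(v,a)$, and $\nu\in C(v,a')=K^{\ast}$ forces $H+K=H$; one then wishes to conclude that the blow‑up of $\{w_{j}>t\}$ at $y$ equals $H$ as well, so that $y\in\partial^{*}\{w_{j}>t\}$.

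The step I expect to be the main obstacle is precisely this last one, because $E\mapsto E+K$ is a global operation: a far‑away part of $\{u>t\}$ could be visible from near $y$ through $-K$, so in general the blow‑up of $\{w_{j}>t\}$ need not commute with $E\mapsto E+K$. This is exactly why the preliminary localization to the small balls $B_{j}$ — together with the truncation $\tilde u_{j}=(u+M)\ch_{B_{j}}$, which confines the relevant superlevel sets to $B_{j}$ — is needed: on $B_{j}$ the hypothesis that $\tfrac{dDu}{d|Du|}$ is nearly constant forces $\{u>t\}\cap B_{j}$ to be, for $\mathcal L^{1}$‑a.e.\ $t$, essentially a half‑disk with normal close to $\nu_{j}$, so near $y$ the mass of $\tilde u_{j}$ lies essentially on the $H$‑side, the spurious global contribution to $(z-K)\cap\{\tilde u_{j}>t\}$ is negligible, and the blow‑up identity genuinely holds; making this last point quantitative (controlling the coarea error terms on $B_{j}$) is the part that requires care. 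Combining the resulting local statements via $|Dw_{j}|\ll|Dw|$ yields $|Du|\mres_{A}\ll|Dw|$, which completes the proof.
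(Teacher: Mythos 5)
Note first that the paper does not actually prove this proposition: it is imported verbatim from De Lellis \cite[Proposition 5.1]{DLe}, so your attempt has to be measured against that cone construction. Your skeleton is the right one and several pieces are fine: the duality $C(v,a')=K^{*}$ with $K=C(v,\sqrt{1-(a')^{2}})$, the fact that a $K$-monotone function has polar vector in $C(v,a')$ $|Dw_j|$-a.e., the finiteness of $|Dw_j|(B(x,r))$ via coarea, and the absolute continuity $|Dw_j|\ll|Dw|$ from $Dw\cdot v\ge c_j\,Dw_j\cdot v\ge c_ja'|Dw_j|$.

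The gap is exactly where you place it, and it is not a technicality but the heart of the proposition. First, the covering you posit does not exist: Lebesgue-point arguments give balls $B_j$ centered at $|Du|\mres_A$-density points on which the \emph{average} of $|\tfrac{dDu}{d|Du|}-\nu_j|$ and the ratio $|Du|(B_j\setminus A)/|Du|(B_j)$ are small; they do not give balls on which the polar is pointwise close to $\nu_j$, and on $B_j\setminus A$ it is unconstrained. Second, and more seriously, the local statement you reduce to, $|Du|\mres_{A\cap B_j}\ll|Dw_j|$, is false in general: even when the averaged defect on $B_j$ is tiny, for a nonnegligible set of levels $t$ the set $\{u>t\}\cap B_j$ can contain a thin strip or blob lying in the $-K$ direction from a portion of $\partial^{*}\{u>t\}\cap A$; the $K$-shadow of that blob makes this portion a density-one subset of $\{w_j>t+M\}$, so it disappears from $\partial^{*}\{w_j>\cdot\}$ and carries no $|Dw_j|$-mass, while its $|Du|$-mass can be positive. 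Your heuristic that near-constancy of the polar forces $\{u>t\}\cap B_j$ to be ``essentially a half-disk'' for a.e.\ $t$ is not correct (the smallness is only integrated over $t$), so the blow-up identity cannot be asserted level by level. What the actual proof supplies at this point is a quantitative crossing/section estimate: the shadowed part of $\partial^{*}\{u>t\}\cap A\cap B_j$ is controlled, by slicing along directions of $K$ and counting the compensating ``down'' crossings (this is where the strict gap $a>a'$ enters, making the projection along $K$-directions non-degenerate on boundary pieces with normal in $C(v,a)$), by the boundary mass with normal outside the cone, which after integration in $t$ is at most $C|Du|(B_j\setminus A)$; one then takes coverings with $\sum_j|Du|(B_j\setminus A)\le 2^{-k}$, builds one function $w^{(k)}$ for each $k$, and sums over $k$ to kill the exceptional mass in the limit. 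Without this estimate and the extra summation over finer coverings, your reduction to the local statements cannot be completed, so the proposal as written has a genuine gap.
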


\begin{lemma}\label{lem:H and capa}
Let $A\subset \R^2$. Then $\mathcal H^1_{\infty}(A)\le 10\capa_1(A)$.
\end{lemma}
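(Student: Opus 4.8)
The plan is to bound $\mathcal H^1_\infty(A)$ by testing the $1$-capacity against an arbitrary competitor: a function $u\in W^{1,1}(\R^2)$ with $u\ge 1$ a.e.\ on an open set $U\supset A$ (if no such $u$ exists then $\capa_1(A)=\infty$ and there is nothing to prove). I would slice $u$ by the coarea formula and reduce the whole statement to a \emph{boxing inequality}: for every set of finite perimeter $E\subset\R^2$ with $\mathcal L^2(E)<\infty$,
\[
\mathcal H^1_\infty\big(E^{(1)}\big)\le \tfrac{20}{\pi}\,|D\ch_E|(\R^2),
\]
where $E^{(1)}$ is the set of points of density $1$ of $E$. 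Granting this, fix a Borel representative $u^*$ of $u$ (say the precise representative \eqref{eq:precise representative}), so that $u^*=u$ a.e., and put $E_t:=\{u^*>t\}$. Since $u\in W^{1,1}(\R^2)\subset\BV(\R^2)$, the coarea formula \eqref{eq:coarea} gives $\int_{\R}|D\ch_{E_t}|(\R^2)\,dt=|Du|(\R^2)=\int_{\R^2}|\nabla u|\,d\mathcal L^2$; in particular $E_t$ has finite perimeter for a.e.\ $t$, while $\mathcal L^2(E_t)\le t^{-1}\Vert u\Vert_{L^1(\R^2)}<\infty$ for every $t>0$. Moreover, if $x\in U$ then $u\ge 1$ a.e.\ on some ball $B(x,\rho)\subset U$, hence $u^*\ge 1>t$ on $B(x,\rho)$ for every $t<1$, so $B(x,\rho)\subset E_t$ and $x\in E_t^{(1)}$; thus $A\subset U\subset E_t^{(1)}$. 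Applying the boxing inequality for a.e.\ $t\in(0,1)$ and taking the essential infimum,
\[
\mathcal H^1_\infty(A)\le \frac{20}{\pi}\essinf_{t\in(0,1)}|D\ch_{E_t}|(\R^2)\le \frac{20}{\pi}\int_0^1|D\ch_{E_t}|(\R^2)\,dt\le \frac{20}{\pi}\,\Vert u\Vert_{W^{1,1}(\R^2)},
\]
and taking the infimum over all admissible $u$ gives $\mathcal H^1_\infty(A)\le \tfrac{20}{\pi}\capa_1(A)\le 10\,\capa_1(A)$ since $20/\pi<10$.

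It remains to prove the boxing inequality, which is the heart of the argument; it is essentially Gustin's boxing inequality, and I would deduce it from the relative isoperimetric inequality \eqref{eq:rel isop ineq}. Fix $x\in E^{(1)}$. The map $r\mapsto \mathcal L^2(B(x,r)\cap E)/(\pi r^2)$ is continuous on $(0,\infty)$, tends to $1$ as $r\to 0^+$ (since $x\in E^{(1)}$) and to $0$ as $r\to\infty$ (since $\mathcal L^2(E)<\infty$), so by the intermediate value theorem there is $r_x\in(0,\infty)$ with $\mathcal L^2(B(x,r_x)\cap E)=\tfrac{1}{2}\pi r_x^2$. Then \eqref{eq:rel isop ineq} yields $\tfrac{1}{2}\pi r_x^2\le r_x\,|D\ch_E|(B(x,r_x))$, that is,
\[
r_x\le \tfrac{2}{\pi}\,|D\ch_E|(B(x,r_x))\le \tfrac{2}{\pi}\,|D\ch_E|(\R^2).
\]
Hence the balls $\{B(x,r_x):x\in E^{(1)}\}$ have uniformly bounded radii and cover $E^{(1)}$, so the $5r$-covering lemma yields a pairwise disjoint countable subfamily $\{B(x_i,r_i)\}_i$ with $E^{(1)}\subset\bigcup_i B(x_i,5r_i)$, whence
\[
\mathcal H^1_\infty\big(E^{(1)}\big)\le\sum_i\diam B(x_i,5r_i)=10\sum_i r_i\le \frac{20}{\pi}\sum_i|D\ch_E|(B(x_i,r_i))\le \frac{20}{\pi}\,|D\ch_E|(\R^2),
\]
the last step by disjointness.

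The routine parts are the slicing, the inclusion $W^{1,1}(\R^2)\subset\BV(\R^2)$, and the final bookkeeping with constants. The main obstacle is the boxing inequality, and within it two points need care. First, one must cover the set $E_t^{(1)}$ of density-$1$ points, not $E_t$ itself: $\mathcal H^1_\infty$ is not absolutely continuous with respect to $\mathcal L^2$, so a Lebesgue-null set cannot simply be discarded, and this is exactly what the density argument built on \eqref{eq:rel isop ineq} is designed to handle. Second, the radii $r_x$ must be uniformly bounded for the $5r$-covering lemma to apply, which is what forces $\mathcal L^2(E_t)<\infty$ into the argument — it both produces the radius of density $1/2$ and bounds it by the total perimeter. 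Everything else is just tracking constants so as to land below $10$.
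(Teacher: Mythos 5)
Your proof is correct and follows essentially the same route as the paper: test against a $W^{1,1}$ competitor, slice by the coarea formula, find for each point a radius where the superlevel set has density exactly $1/2$, apply the relative isoperimetric inequality \eqref{eq:rel isop ineq}, and conclude with the $5r$-covering lemma. The only difference is packaging — you isolate a boxing inequality for density-one points and average over all levels $t\in(0,1)$, whereas the paper fixes a single good level $t$ with $A$ in the interior of $\{u>t\}$ — but the substance of the argument is identical.
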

\begin{proof}
	We can assume that $\capa_1(A)<\infty$.
	Let $\eps>0$.
	We find a function $u\in W^{1,1}(\R^2)$ such that
	$u\ge 1$ in a neighborhood of $A$, and
	\[
	\int_{\R^2} |\nabla u|\,d\mathcal L^2\le \capa_1(A)+\eps.
	\]
	Here $u\in W^{1,1}(\R^2)\subset  \BV(\R^2)$ with $|Du|(\R^2)= \int_{\R^2} |\nabla u|\,d\mathcal L^2$,
	and then by the coarea formula \eqref{eq:coarea} we find
	a set $E:=\{u>t\}$ for some $0<t<1$, for which
	\[
		|D\ch_{E}|(\R^2)\le |Du|(\R^2) \le \capa_1(A)+\eps,
	\]
	and  $A$ is contained in the interior of $E$.
Then necessarily $\mathcal L^2(E)<\infty$, and for every $x\in A$ we find $r_x>0$ such that
\[
\frac{\mathcal L^2(B(x,r_x)\cap E)}{\mathcal L^2(B(x,r_x))}=\frac{1}{2}.
\]
From the relative isoperimetric inequality \eqref{eq:rel isop ineq}, we get
\[
\frac{\pi r_x^2}{2}
=\min\{\mathcal L^2(B(x,r_x)\cap E),\mathcal L^2(B(x,r_x)\setminus E)\}
\le r_x|D\ch_{E}|(B(x,r_x)).
\]
In particular, the radii $r_x$ are uniformly bounded from above by $(2/\pi)|D\ch_E|(\R^2)$.
By the $5$-covering theorem (see e.g. \cite[p. 60]{HKSTbook}),
we can choose a finite or countable collection $\{B(x_j,r_j)\}_{j}$ of pairwise disjoint balls such that
the balls $B(x_j,5r_j)$ cover $A$.
Then
\begin{align*}
\mathcal H_{\infty}^1(A)
\le \sum_{j}10r_j
\le \frac{20}{\pi}\sum_{j}|D\ch_{E}|(B(x_j,r_j))
\le 10 |D\ch_{E}|(\R^2)
\le 10(\capa_1(A)+\eps).
\end{align*}
Letting $\eps\to 0$, we get the result.
\end{proof}

For $x\in\R^2$, let $p(x):=|x|$.

\begin{lemma}\label{lem:polar projection}
Let $A\subset\R^2$. Then we have $\mathcal L^1(p(A))\le 10\capa_1(A)$.
\end{lemma}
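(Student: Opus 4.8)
The plan is to reduce the statement to Lemma \ref{lem:H and capa} via the observation that $p$ is $1$-Lipschitz. Indeed, for $x,y\in\R^2$ one has $|p(x)-p(y)|=\bigl||x|-|y|\bigr|\le |x-y|$, so if $\{U_i\}_i$ is any countable cover of $A$, then $\{p(U_i)\}_i$ is a cover of $p(A)$ with $\diam p(U_i)\le\diam U_i$ for each $i$. Taking the infimum over all such covers yields $\mathcal H^1_{\infty}(p(A))\le\mathcal H^1_{\infty}(A)$.

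Next I would use the elementary fact that on the line, the Hausdorff content $\mathcal H^1_{\infty}$ agrees with Lebesgue outer measure: every $U\subset\R$ is contained in an interval of length $\diam U$, so $\mathcal L^1(U)\le\diam U$, and hence for every cover $\{V_i\}_i$ of a set $E\subset\R$ we get $\mathcal L^1(E)\le\sum_i\mathcal L^1(V_i)\le\sum_i\diam V_i$, giving $\mathcal L^1(E)\le \mathcal H^1_{\infty}(E)$; the reverse inequality follows by covering $E$ by open intervals of nearly minimal total length. Here $\mathcal L^1$ denotes Lebesgue outer measure, since $p(A)$ need not be measurable for general $A$. Combining the two steps with Lemma \ref{lem:H and capa} gives
\[
\mathcal L^1(p(A))=\mathcal H^1_{\infty}(p(A))\le\mathcal H^1_{\infty}(A)\le 10\capa_1(A),
\]
which is the claim.

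Alternatively, one can repeat the proof of Lemma \ref{lem:H and capa} almost verbatim: pick $u\in W^{1,1}(\R^2)$ with $u\ge 1$ near $A$ and $\int_{\R^2}|\nabla u|\,d\mathcal L^2\le\capa_1(A)+\eps$, pass to a superlevel set $E=\{u>t\}$ with $|D\ch_E|(\R^2)\le\capa_1(A)+\eps$ and $A\subset\inte E$, choose for each $x\in A$ a radius $r_x$ of density $1/2$ and apply \eqref{eq:rel isop ineq}, and then invoke the $5$-covering theorem to obtain pairwise disjoint balls $B(x_j,r_j)$ whose dilates $B(x_j,5r_j)$ cover $A$. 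Since $p$ maps $B(x_j,5r_j)$ into an interval of length at most $10r_j$ (if $0\in B(x_j,5r_j)$ then $|x_j|<5r_j$ and $p(B(x_j,5r_j))\subset[0,|x_j|+5r_j)$, still of length below $10r_j$), one gets $\mathcal L^1(p(A))\le\sum_j 10r_j\le\frac{20}{\pi}\sum_j|D\ch_E|(B(x_j,r_j))\le 10|D\ch_E|(\R^2)\le 10(\capa_1(A)+\eps)$, and lets $\eps\to 0$. There is no serious obstacle here; the only points needing a little care are the identification of $\mathcal H^1_{\infty}$ with Lebesgue outer measure on $\R$ (or, in the second proof, checking that a ball of radius $5r_j$ is sent by $p$ to an interval of length at most $10r_j$ even when it contains the origin), and the harmless fact that one is really estimating the outer measure of $p(A)$.
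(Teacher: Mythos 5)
Your first argument is exactly the paper's proof: since $p$ is $1$-Lipschitz, $\mathcal L^1(p(A))=\mathcal H^1_{\infty}(p(A))\le\mathcal H^1_{\infty}(A)\le 10\capa_1(A)$ by Lemma \ref{lem:H and capa}, and your justification of the identity $\mathcal H^1_{\infty}=\mathcal L^1$ on the line is correct. The alternative direct argument is fine too, but unnecessary; the proposal is correct and takes essentially the same approach as the paper.
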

\begin{proof}
Note that $p$ is a $1$-Lipschitz function. Thus we estimate
\[
\mathcal L^1(p(A))
=\mathcal H^1_{\infty}(p(A))
\le \mathcal H^1_{\infty}(A)
\le 10\capa_1(A)
\]
by Lemma \ref{lem:H and capa}.
\end{proof}

The following is a special case of \cite[Theorem 3.10]{LaSP}.

\begin{theorem}\label{thm:strict and uni conv}
	Let $\Om\subset \R^2$ be open and suppose that $u_i\to u$ strictly in $\BV(\Om)$.
	Then we find a subsequence of $u_i$ (not relabeled)
	such that for every compact set $K\subset \Om\setminus S_u$
	and every $\eps>0$, there is an open set $G\subset \Om$ 
	 such that $\capa_1(G)<\eps$ and $u_i^*\to u^*$ uniformly in $K\setminus G$.
\end{theorem}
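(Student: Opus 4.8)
The strategy is to reduce the quasi-uniform convergence to a statement on convergence in $\capa_1$-capacity of the precise representatives, and to prove that statement by slicing into superlevel sets and exploiting the relative isoperimetric inequality exactly as in Lemma~\ref{lem:H and capa}. First I would set up the subsequence. Strict convergence gives $u_i\to u$ in $L^1(\Om)$ and $|Du_i|(\Om)\to|Du|(\Om)$; in particular $|Du|(\Om)<\infty$, $\sup_i|Du_i|(\Om)<\infty$, $Du_i\overset{*}{\rightharpoondown}Du$ and hence $|Du_i|\overset{*}{\rightharpoondown}|Du|$ in $\Om$. By the coarea formula \eqref{eq:coarea}, $\int_{\R}\mathcal L^2(\{u_i>t\}\triangle\{u>t\})\,dt=\Vert u_i-u\Vert_{L^1(\Om)}\to0$ and $\int_{\R}|D\ch_{\{u_i>t\}}|(\Om)\,dt\to\int_{\R}|D\ch_{\{u>t\}}|(\Om)\,dt$, while lower semicontinuity yields $\liminf_i|D\ch_{\{u_i>t\}}|(\Om)\ge|D\ch_{\{u>t\}}|(\Om)$ for a.e.\ $t$. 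After one passage to a subsequence (not relabeled) and a diagonal argument we therefore have, for a.e.\ $t\in\R$, that $\mathcal L^2(\{u_i>t\}\triangle\{u>t\})\to0$ and $\ch_{\{u_i>t\}}\to\ch_{\{u>t\}}$ strictly in $\BV(\Om)$, so in particular $|D\ch_{\{u_i>t\}}|\overset{*}{\rightharpoondown}|D\ch_{\{u>t\}}|$ locally in $\Om$. This is the only subsequence used.

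Next comes the reduction: it suffices to prove that for every compact $K\subset\Om\setminus S_u$ and every $\eta>0$,
\[
\capa_1\Big(\bigcup_{i\ge N}\{x\in K:\ |u_i^*(x)-u^*(x)|>\eta\}\Big)\longrightarrow 0\qquad(N\to\infty).
\]
Granting this, fix $K$ and $\eps$, choose $N_m$ so that the set above with $\eta=1/m$ has $\capa_1<\eps 2^{-m}$, enlarge $\bigcup_m\bigcup_{i\ge N_m}\{x\in K:|u_i^*-u^*|>1/m\}$ to an open set $G$ with $\capa_1(G)<\eps$ by outer regularity of $\capa_1$, and observe that $\sup_{K\setminus G}|u_i^*-u^*|\le 1/m$ for all $i\ge N_m$ and all $m$, i.e.\ $u_i^*\to u^*$ uniformly on $K\setminus G$; note that no further subsequence is needed.

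For the capacity estimate itself, the ``upper'' part of the bad set is contained, after splitting at rational levels, in the union over $q\in\Q$ of $\{x\in K:\ u^*(x)<q\}\cap\{x\in K:\ \exists\, i\ge N,\ u_i^*(x)>q+\tfrac\eta2\}$, and each $q$ is treated with an $\eps 2^{-k}$ budget (the lower part is symmetric). For such $q$: if $x\in K$ and $u^*(x)<q$ then $x\notin S_u$, so $x$ is a point of density $0$ of $E_q:=\{u>q\}$ and $u_{B(x,r)}\to u^*(x)$; and if $u_i^*(x)>q+\tfrac\eta2$ for some $i\ge N$, then $\{u_i>q\}$, hence $\{u_i>q\}\setminus E_q$, has positive upper density at $x$. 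I would first remove, for a large parameter $\lambda$, the set where $\sup_{0<r<\delta}|Du|(B(x,r))/r>\lambda$, whose $\capa_1$ is $\le C\lambda^{-1}|Du|(\Om)$ by the $5$-covering argument of Lemma~\ref{lem:H and capa}. On the complement $|Du|(B(x,r))\le\lambda r$ for $r<\delta$, and then the relative isoperimetric inequality \eqref{eq:rel isop ineq} together with the decay of $u_{B(x,r)}$ forces each remaining bad point $x$ to admit a radius $r_x\lesssim\mathcal L^2(\{u_i>q\}\triangle E_q)^{1/2}$ at which $\{u_i>q\}\triangle E_q$ fills a fixed small fraction of $B(x,r_x)$, whence $r_x\lesssim|D\ch_{\{u_i>q\}\triangle E_q}|(B(x,r_x))$, again by \eqref{eq:rel isop ineq}. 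A $5$-covering then covers the bad points by balls of radius $\to0$ (uniformly for a.e.\ $q$ as $N\to\infty$, by the first step) whose total radius, hence the $\capa_1$ of their union, is controlled by the ``excess'' variation $|D\ch_{\{u_i>q\}\triangle E_q}|$ over a shrinking neighbourhood of $K$. Finally the weak$^*$ convergence of the level sets shows this excess is asymptotically carried near $S_u$ — irrelevant since $\dist(K,S_u)>0$ — and at scales tending to $0$ (again of small $\capa_1$, by the maximal-function removal); assembling these bounds over the countably many $q$ gives the stated decay.

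The main obstacle is that strict convergence controls $\Vert u_i-u\Vert_{L^1}$ but \emph{not} $|D(u_i-u)|(\Om)$, so a direct application of the classical BV boxing/capacitary inequality only bounds — rather than shrinks — the $\capa_1$ of the bad sets. Passing to individual superlevel sets, where strict convergence becomes the rigid statement $|D\ch_{\{u_i>t\}}|(\Om)\to|D\ch_{\{u>t\}}|(\Om)$, is what lets one locate the ``excess'' perimeter in exactly the two regions one is permitted to discard: near $S_u$ (excised by hypothesis) and at vanishing scales (of small $\capa_1$ once a maximal function of $|Du|$ is removed). Converting this qualitative localisation of the excess into a quantitative smallness of the $\capa_1$ of the covering balls that is uniform both in $i\ge N$ and in the level $q$ is the technical heart of the argument.
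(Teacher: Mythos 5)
Your overall scaffolding (coarea slicing, strict convergence of a.e.\ level set along a subsequence, the reduction to capacitary convergence of tail bad sets, outer regularity of $\capa_1$) is reasonable, but the core capacity estimate has two genuine gaps. First, the step ``each remaining bad point $x$ admits a radius $r_x$ at which $\{u_i>q\}\triangle E_q$ fills a \emph{fixed} small fraction of $B(x,r_x)$'' does not follow from $u_i^*(x)>q+\eta/2$. By the definition \eqref{eq:precise representative}, this inequality only tells you that the upper density of $\{u_i>q\}$ at $x$ is nonzero; it gives no lower bound on it. For instance, if $u_i$ behaves near $x$ like $M\ch_C$ for a thin cusp $C$ with vertex at $x$ and density $\theta$, then $u_i^*(x)=M$ while the density of $\{u_i>q\}$ at $x$ is $\theta$, which can be arbitrarily small; nothing in strict convergence rules such behavior out at the points you are trying to cover. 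Without a uniform fraction, \eqref{eq:rel isop ineq} only gives $r_x\le |D\ch_{\{u_i>q\}\triangle E_q}|(B(x,r_x))/(\pi\theta_x)$ with uncontrolled $\theta_x$, so the $5$-covering bound on $\sum_j r_j$, hence on $\capa_1$, degenerates. Your maximal-function removal concerns $|Du|$, not $|Du_i|$, and cannot repair this: removing a maximal-function set for each $i$ separately costs about $C|Du_i|(\Om)/\lambda$ per index, which is not summable over $i\ge N$.

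Second, even granting a fixed fraction, the covering argument bounds the capacity of the bad set by $C\,|D\ch_{\{u_i>q\}\triangle E_q}|$ of a small neighbourhood of $K$, and strict convergence of the level sets does not make this small, nor does it localize it where you need. Strictness gives $|D\ch_{\{u_i>q\}}|(\Om)\to|D\ch_{E_q}|(\Om)$ and $L^1$ convergence, but the perimeter of the symmetric difference is generically of order $2|D\ch_{E_q}|$ and sits right inside $K$: already for $u(x)=x_1$, $u_i=u-1/i$ (so $S_u=\emptyset$), the set $\{u_i>q\}\triangle E_q$ is a thin strip along $\{x_1=q\}$ whose perimeter does not vanish and is nowhere near $S_u$. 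So the concluding claim that ``the excess is asymptotically carried near $S_u$ and at scales tending to $0$'' is not a consequence of strict or weak$^*$ convergence; moreover weak$^*$ convergence cannot be applied to the union of your covering balls, since that open set is chosen depending on $i$. This is exactly the obstruction you correctly identify for $|D(u_i-u)|$, reappearing at the level-set level, and the sketch does not resolve it. (The summation over countably many rational levels is a lesser issue, fixable by truncating $u$ and using finitely many levels spaced by about $\eta$.) Note also that the paper itself does not prove this theorem: it quotes it as a special case of \cite[Theorem 3.10]{LaSP}, whose proof obtains the missing $i$-uniform control through pointwise estimates for discrete convolutions and maximal functions of the measures $|Du_i|$ combined with capacitary weak-type estimates, machinery that your argument would need some substitute for.
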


\begin{proposition}\label{prop:spehere uniform conv}
Suppose $u_i\to u$ strictly in $\BV(B(0,R))$ with $R>0$, and that $S_u=\emptyset$.
Then we find a subsequence (not relabeled) such that for a.e. $0<r<R$ ,
$u_i^*\to u^*$ uniformly in $\partial B(0,r)$.
\end{proposition}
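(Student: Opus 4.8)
The plan is to deduce this from Theorem \ref{thm:strict and uni conv} together with the radial projection estimate of Lemma \ref{lem:polar projection}. First I would apply Theorem \ref{thm:strict and uni conv}: since $u_i\to u$ strictly in $\BV(B(0,R))$ and $S_u=\emptyset$, we pass to a subsequence so that for every compact $K\subset B(0,R)$ and every $\eps>0$ there is an open $G\subset B(0,R)$ with $\capa_1(G)<\eps$ and $u_i^*\to u^*$ uniformly on $K\setminus G$. Exhaust $B(0,R)$ by compact sets $K_m:=\overline B(0,R_m)$ with $R_m\uparrow R$, and for each $m$ and each $\ell\in\N$ choose an open set $G_{m,\ell}$ with $\capa_1(G_{m,\ell})<2^{-\ell}/m$ such that $u_i^*\to u^*$ uniformly on $K_m\setminus G_{m,\ell}$; a diagonal argument over $(m,\ell)$ extracts a single subsequence that works simultaneously for all $m,\ell$.

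Next I would pass from capacity-smallness to smallness of the set of \emph{bad radii}. Let $p(x)=|x|$ as in the excerpt. By Lemma \ref{lem:polar projection}, $\mathcal L^1(p(G_{m,\ell}))\le 10\capa_1(G_{m,\ell})<10\cdot 2^{-\ell}/m$. Set $N_m:=\bigcap_{\ell}p(G_{m,\ell})$, so that $\mathcal L^1(N_m)=0$, and put $N:=\bigcup_m N_m$, a Lebesgue-null subset of $(0,R)$. Fix any $r\in(0,R)\setminus N$ and choose $m$ with $r<R_m$; then $r\notin N_m$, so there exists $\ell$ with $r\notin p(G_{m,\ell})$, which means the sphere $\partial B(0,r)$ is disjoint from $G_{m,\ell}$. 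Since $\partial B(0,r)\subset K_m\setminus G_{m,\ell}$ and $u_i^*\to u^*$ uniformly on $K_m\setminus G_{m,\ell}$, we conclude $u_i^*\to u^*$ uniformly on $\partial B(0,r)$. As this holds for every $r\in(0,R)\setminus N$ and $\mathcal L^1(N)=0$, the proof is complete.

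The only genuinely delicate point is bookkeeping the quantifiers so that a \emph{single} subsequence serves all compact sets $K_m$ and all tolerances $2^{-\ell}/m$ at once; this is handled by the standard diagonalization, using that a further subsequence of a sequence along which uniform convergence holds on a fixed set still converges uniformly there. Everything else — the exhaustion, the union of null sets, and the fact that a sphere avoiding $G_{m,\ell}$ lies in the good set — is routine. One should also note that $\partial B(0,r)$ is indeed compact and contained in $K_m$ for $r<R_m$, which is immediate; no further regularity of $u$ or of the spheres is needed beyond what Theorem \ref{thm:strict and uni conv} already provides.
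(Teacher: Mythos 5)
Your proof is correct and follows essentially the same route as the paper: apply Theorem \ref{thm:strict and uni conv}, pass the capacity bound through Lemma \ref{lem:polar projection} to a null set of bad radii, and exhaust $B(0,R)$ by compact balls. The only cosmetic difference is that your diagonalization is unnecessary, since Theorem \ref{thm:strict and uni conv} already yields a single subsequence working simultaneously for every compact $K$ and every $\eps>0$.
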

\begin{proof}
Choose a subsequence (not relabeled) given by Theorem \ref{thm:strict and uni conv}.
We find open sets $G_j\subset B(0,R)$
such that $\capa_1(G_j)<1/j$ and
$u_i^*\to u^*$ uniformly in $\overline{B}(0,R-1/j)\setminus G_j$.
By Lemma \ref{lem:polar projection}, we have
$\mathcal L^1(p(G_j))< 10/j$,
and so  $u_i^*\to u^*$ uniformly in $\partial B(0,r)$ for all $r\in (0,R-1/j)\setminus p(G_j)$,
with $\mathcal L^1((0,R-1/j)\setminus p(G_j))>R-11/j$.
Thus  $u_i^*\to u^*$ uniformly in $\partial B(0,r)$ for a.e. $0<r<R$.
\end{proof}

\begin{lemma}\label{lem:two cones}
Let $v_1,v_2\in\R^2$ with $|v_1|=|v_2|=1$ and let $a_1,a_2>0$ such that
$C_1\cap C_2=\{0\}$ and $-C_1\cap C_2=\{0\}$ for
$C_1:=C(v_1,a_1)$ and $C_2:=C(v_2,a_2)$.
Then there exists $\delta>0$, depending only on the two cones, such that for 
every $V\in\R^2$ we have $|V\cdot z|\ge \delta|V||z|$ either for all $z\in C_1$
or for all $z\in C_2$.
\end{lemma}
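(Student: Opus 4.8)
The plan is to recast the statement as a compactness argument on the unit circle. By homogeneity it suffices to treat unit vectors $V$ and to let $z$ range over the sets $K_i:=C_i\cap\{|z|=1\}$, each of which is nonempty (it contains $v_i$, since $v_i\cdot v_i=1\ge a_i$) and compact. For a unit vector $V$ set
\[
m_i(V):=\min_{z\in K_i}|V\cdot z|,
\]
a minimum that is attained by compactness; one checks easily that $V\mapsto m_i(V)$ is $1$-Lipschitz, hence continuous, on the unit circle. With this notation the assertion ``$|V\cdot z|\ge\delta|V||z|$ for all $z\in C_i$'' is equivalent to ``$m_i(V/|V|)\ge\delta$'', so the lemma reduces to producing $\delta>0$ with $\max\{m_1(V),m_2(V)\}\ge\delta$ for every unit $V$.

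To obtain such a $\delta$, I would let $\phi(V):=\max\{m_1(V),m_2(V)\}$, which is continuous on the compact set $\{|V|=1\}$ and therefore attains a minimum $\delta_0$; it remains to check that $\delta_0>0$. Suppose $\phi(V)=0$ for some unit $V$. Then $m_1(V)=0=m_2(V)$, so there are unit vectors $z_1\in C_1$ and $z_2\in C_2$ with $V\cdot z_1=0=V\cdot z_2$. Since the orthogonal complement of $V$ in $\R^2$ is one-dimensional, $z_1=\pm z_2$. If $z_1=z_2$ this produces a nonzero point of $C_1\cap C_2$, contradicting $C_1\cap C_2=\{0\}$; if $z_1=-z_2$ it produces a nonzero point of $(-C_1)\cap C_2$, contradicting $-C_1\cap C_2=\{0\}$. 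Hence $\delta_0>0$, and I take $\delta:=\delta_0$, which by construction depends only on the two cones.

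Finally, to unwind: for $V\ne 0$ we have $\phi(V/|V|)\ge\delta$, so after relabeling we may assume $m_1(V/|V|)\ge\delta$; then for every nonzero $z\in C_1$,
\[
\frac{|V\cdot z|}{|V||z|}=\left|\frac{V}{|V|}\cdot\frac{z}{|z|}\right|\ge m_1(V/|V|)\ge\delta,
\]
i.e. $|V\cdot z|\ge\delta|V||z|$, which also holds trivially when $z=0$; symmetrically if instead $m_2(V/|V|)\ge\delta$, and the case $V=0$ is trivial. I do not expect a genuine obstacle here; the only delicate point is the planar geometry step forcing $z_1=\pm z_2$, together with the observation that the two hypotheses $C_1\cap C_2=\{0\}$ and $-C_1\cap C_2=\{0\}$ are precisely what rule out the two resulting cases --- this is the heart of the lemma. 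An alternative would be to write $\delta$ down explicitly in terms of the half-angles $\arccos a_i$ and the angular gap between the cones, but the compactness argument is cleaner and makes the role of the hypotheses transparent.
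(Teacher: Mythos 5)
Your proof is correct and takes essentially the same approach as the paper's: the paper runs the compactness argument as a sequential contradiction (normalized sequences $z_{1,j}\to z_1\in C_1$, $z_{2,j}\to z_2\in C_2$ with $V\cdot z_1=V\cdot z_2=0$), while you package it as the minimum of the continuous function $\max\{m_1,m_2\}$ on the unit circle. In both cases the decisive step is identical: in $\R^2$ two unit vectors orthogonal to the same $V$ satisfy $z_1=\pm z_2$, which the hypotheses $C_1\cap C_2=\{0\}$ and $-C_1\cap C_2=\{0\}$ exclude.
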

\begin{proof}
Supposing the claim is false, then there exist $V\in \R^2$ and
$z_{1,j}\in C_1\setminus \{0\}$ and $z_{2,j}\in C_2\setminus \{0\}$
such that $|V\cdot z_{1,j}|/(|V||z_{1,j}|)\to 0$ and $|V\cdot z_{2,j}|/(|V||z_{2,j}|)\to 0$ as $j\to\infty$.
We can assume that $|V|=|z_{1,j}|=|z_{2,j}|=1$.
Passing to subsequences (not relabeled) we get $z_{1,j}\to z_{1}\in C_1$ and $z_{2,j}\to z_{2}\in C_2$
with $|z_1|=|z_2|=1$ and $|V\cdot z_{1}|= 0=|V\cdot z_{2}|$.
Thus either $z_1=z_2$ or $z_1=-z_2$, which contradicts the assumption
$C_1\cap C_2=\{0\}$ and $-C_1\cap C_2=\{0\}$.
\end{proof}

Given a positive Radon measure $\kappa$ on $\R^2$, for a fixed $x\in\R^2$ we define for $r>0$
\begin{equation}\label{eq:kappa def}
	[\kappa]_r(A):=\frac{\kappa(x+r A)}{\kappa(B(x,r))}, \quad A\subset B(0,1).
\end{equation}

The following is a special case of Larsen \cite[Lemma 5.1]{Lar}.

\begin{lemma}\label{lem:Larsen}
	Let $\kappa$ be a positive Radon measure on an open set $\Om\subset \R^2$.
	Then for $\kappa$-a.e. $x\in \Om$, there is a sequence $r_j\searrow 0$ and a Radon measure
	$\kappa'$ such that
	\[
	[\kappa]_{r_j}\overset{*}{\rightharpoondown} \kappa'\textrm{ in }B(0,1)\  \textrm{ with }
	\kappa'(B(0,1))=1.
	\]
\end{lemma}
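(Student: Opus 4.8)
The plan is to combine a routine weak* compactness argument with the selection of well-adapted radii; the latter carries all the content, and it is (a special case of) the cited lemma of Larsen. For the compactness part, note first that for $\kappa$-a.e.\ $x\in\Om$ one has $0<\kappa(B(x,r))<\infty$ for all sufficiently small $r>0$ (positivity fails only on $\Om\setminus\supp\kappa$, which is $\kappa$-negligible), so for such $x$ every $[\kappa]_r$ is a probability measure on $B(0,1)$, since $[\kappa]_r(B(0,1))=\kappa(B(x,r))/\kappa(B(x,r))=1$. As $C_c(B(0,1))$ is separable, a diagonal argument gives, for any sequence $r_j\searrow 0$, a subsequence along which $[\kappa]_{r_j}\overset{*}{\rightharpoondown}\kappa'$ for some Radon measure $\kappa'$ on $B(0,1)$ with $\kappa'(B(0,1))\le 1$. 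The whole point is therefore to choose the radii so that no mass escapes to the sphere $\partial B(0,1)$, i.e.\ so that $\kappa'(B(0,1))=1$.

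The convenient reformulation is tightness: it suffices to find $r_j\searrow 0$ such that for every $\eps>0$ there is $t\in(0,1)$ with $\kappa(B(x,r_j)\setminus\overline{B}(x,t r_j))\le\eps\,\kappa(B(x,r_j))$ for all large $j$. Indeed, this says $[\kappa]_{r_j}(\overline{B}(0,t))\ge 1-\eps$ for large $j$, so by upper semicontinuity of the weak* limit on the compact set $\overline{B}(0,t)\subset B(0,1)$ we get $\kappa'(B(0,1))\ge\kappa'(\overline{B}(0,t))\ge 1-\eps$; letting $\eps\to 0$ yields $\kappa'(B(0,1))=1$. Thus everything reduces to the purely measure-theoretic assertion that such radii exist at $\kappa$-a.e.\ $x$ — which is exactly what \cite[Lemma~5.1]{Lar} supplies, and I would simply invoke it.

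Were one to reprove that assertion, the mechanism is a differentiation/covering argument. Let $N\subset\Om$ be the set of $x$ at which no such sequence of radii exists; one shows $\kappa(N)=0$ by arguing that at a point of $N$ a fixed fraction of $\kappa(B(x,r))$ lies in thin concentric outer shells for \emph{all} small $r$, and then covering a portion of $N$ by balls of a cleverly chosen small radius via the Besicovitch theorem bounds $\kappa(N)$ by a quantity tending to $0$. The hard part is exactly this covering estimate: it genuinely needs the radii to be adapted to $\kappa$ — naive choices such as $r_j=2^{-j}$ fail, for instance at the common centre of a family of concentric circles carrying geometrically decaying mass, where every dyadic blow-up is the zero measure — so no soft compactness argument will do. Granting the estimate, the lemma follows by extracting a weakly* convergent subsequence as in the first step.
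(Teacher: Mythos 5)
Your proposal is correct and matches the paper, which offers no proof of this lemma at all: it simply records it as a special case of Larsen \cite[Lemma 5.1]{Lar}, exactly the citation you fall back on after your (correct) reduction via tightness and weak* compactness. The extra scaffolding you supply is sound but not something the paper spells out, so in substance the two arguments coincide.
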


In \eqref{eq:hf prime def} we defined $h_f'$; more explicitly, the definition reads
\[
h_f'(x)
:=\inf\left\{\liminf_{j\to \infty} \frac{L_f(y_j,5r_j)}{l_f(y_j,r_j)}\colon r_j\to 0,\,y_j\to x,\, 
|y_j-x|< r_j\right\}.
\]
In addition to the Euclidean norm $|A|$ for a matrix $A\in \R^{k\times n}$, we also consider
the maximum norm
\[
\Vert A\Vert_{\textrm{max}}:=\max_{v\in \R^n,|v|=1}|Av|.
\]
The following is a restating of Theorem \ref{thm:main Euclidean}.

\begin{theorem}\label{thm:monotone and injective}
	Let $\Om\subset \R^2$ be open and convex.
	Let $v_1,v_2\in\R^2$ with $|v_1|=|v_2|=1$ and let $a_1,a_2>0$ such that
	$C_1\cap C_2=\{0\}$ and $-C_1\cap C_2=\{0\}$ for
	$C_1:=C(v_1,a_1)$ and $C_2:=C(v_2,a_2)$.
	Suppose $f'_1,f'_2\in\BV(\Om)$ with $\tfrac{dDf'_{1}}{d|Df'_1|}(x)\in C_1$
	for $|Df'_1|$-a.e. $x\in \Om$
	and $\tfrac{dDf'_{2}}{d|Df'_2|}(x)\in C_2$ for $|Df'_2|$-a.e. $x\in \Om$.
	Let $f_1(y):=f'_1(y)+v_1\cdot y$, $f_2(y):=f'_2(y)+v_2\cdot y$, and $f=(f_1,f_2)$.
	Then $f^*$ is injective, and for $|Df|$-a.e. $x\in \Om$,
	if $h'_{f^*}(x)=\infty$ then $\tfrac{dDf}{d|Df|}(x)$ has rank one.
\end{theorem}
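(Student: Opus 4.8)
The plan is to split the statement into two parts: the injectivity of $f^*$, and the rank-one dichotomy for $|Df|$-a.e.\ $x$.

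\textbf{Step 1: Injectivity of $f^*$.} The key structural fact is that $f_1=f_1'+v_1\cdot y$ is a monotone-type function in the $v_1$-direction: writing $Df_1 = Df_1' + v_1\mathcal L^2$, and using that $\tfrac{dDf_1'}{d|Df_1'|}\in C_1 = C(v_1,a_1)$, I would show that for every $V\in\R^2$ with $V\cdot v_1>0$ (in fact for $V$ in an appropriate half-space determined by $C_1$ and $v_1$), the directional derivative $D_V f_1 = V\cdot Df_1$ is a nonnegative measure with $D_V f_1 \ge (V\cdot v_1)\mathcal L^2 > 0$. Thus $f_1^*$ is strictly increasing along every line segment in direction $V$ contained in the convex set $\Om$; and similarly $f_2^*$ is strictly increasing along segments in directions $W$ with $W\cdot v_2$ controlling things. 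Given two distinct points $x,x'\in\Om$, the segment joining them has some direction $e$; by Lemma \ref{lem:two cones} applied with $V:=e$ (after choosing orientation), either $|e\cdot z|\ge\delta|z|$ for all $z\in C_1$ or for all $z\in C_2$. In the first case $f_1^*$ is strictly monotone along that segment, so $f_1^*(x)\ne f_1^*(x')$; in the second case $f_2^*(x)\ne f_2^*(x')$. Either way $f^*(x)\ne f^*(x')$. (Some care is needed to make "strictly monotone along a.e.\ line" into a statement about the precise representative at the two given endpoints, using a Fubini/slicing argument for BV functions and the fact that one can perturb the segment slightly; convexity of $\Om$ is what makes the segments available.)

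\textbf{Step 2: The rank-one dichotomy.} Suppose $x$ is a point where $\tfrac{dDf}{d|Df|}(x)$ has full rank; the goal is to show $h'_{f^*}(x)<\infty$ for $|Df|$-a.e.\ such $x$. The idea is that at such $x$ (and we may take $x$ to be a Lebesgue point of $\tfrac{dDf}{d|Df|}$ with respect to $|Df|$, a joint Lebesgue point of the two rows, and a point where the blow-up of the relevant measures behaves well via Lemma \ref{lem:Larsen}), the rescaled maps $f_r(y):=(f^*(x+ry)-f^*(x))/\lambda_r$ converge, for a suitable sequence $r=r_j\to 0$ and normalization $\lambda_{r_j}$, to an affine map $y\mapsto Ay$ with $A$ of full rank — essentially because the singular part is negligible after rescaling at a.e.\ point and the absolutely continuous part has a well-defined density which here is (close to) a constant full-rank matrix $A$. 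Strict $\BV$-convergence of the blow-ups to this affine limit, combined with the uniform-convergence-on-spheres statement of Proposition \ref{prop:spehere uniform conv} (applied to each component $f_{i,r}$), gives that on most small spheres $\partial B(0,\rho)$, $f^*_{r_j}$ is uniformly close to the affine map $y\mapsto Ay$. Since $A$ has full rank, $y\mapsto Ay$ maps $\partial B(0,\rho)$ to an ellipse with bounded eccentricity $\kappa(A)=|A|\,|A^{-1}|$, which directly yields $L_{f_{r_j}}(0,\rho)/l_{f_{r_j}}(0,\rho)$ bounded by roughly $\kappa(A)+o(1)$; unravelling the rescaling shows $H_{f^*}(x, r_j\rho)\lesssim\kappa(A)$. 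To get $h'_{f^*}$ (with the $5r$ versus $r$ and the moving center $y$), I would redo the sphere estimate allowing $y_j\to x$ with $|y_j-x|<r_j$ inside the same blow-up picture — the affine limit is insensitive to the $o(r)$ shift of center — and use $L_{f^*}(y_j,5r_j)/l_{f^*}(y_j,r_j)$ which again is controlled by the eccentricity of $A$ times a factor coming from the ratio $5$ (a fixed constant). Hence $h'_{f^*}(x)<\infty$.

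\textbf{Main obstacle.} The hard part will be Step 2, specifically making the blow-up/strict-convergence argument rigorous: establishing that at $|Df|$-a.e.\ full-rank point the rescalings converge \emph{strictly} in $\BV$ (not merely weakly) to a full-rank affine map — this requires controlling the singular part of $\|Df\|$ in the blow-up, which is exactly where Lemma \ref{lem:Larsen} and a careful choice of the radii $r_j$ (and the normalization $\lambda_{r_j}$) enter — and then transferring uniform convergence on a.e.\ sphere (Proposition \ref{prop:spehere uniform conv}) into the pointwise diameter bounds $L,l$ on the \emph{original} map at \emph{shifted} centers. Handling the shifted center $y_j$ simultaneously with the sphere-exceptional-set from Proposition \ref{prop:spehere uniform conv} is the fiddliest point, and is presumably why the paper later proves the cleaner Theorem \ref{thm:main metric space} separately: once one knows $h'_{f^*}(x)=\infty$ fails generically wherever the rank is full, the contrapositive gives rank one on the complement. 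I would also need the injectivity from Step 1 to even invoke the metric-space machinery consistently, so I would prove Step 1 first.
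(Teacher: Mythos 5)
Your Step 1 is essentially the paper's argument (the paper implements the directional monotonicity by mollification, $\nabla f_{1,\delta}=\phi_\delta*Df_1'+v_1$ with $\phi_\delta*Df_1'$ valued in the convex cone $C_1$, rather than by slicing, and then separates any two points via Lemma \ref{lem:two cones}), so that part is fine. The genuine gap is in Step 2: you never use the cone-monotonicity of the components there, and without it the step fails. Recall $L_{f^*}(y,5r)$ is a supremum over the whole set $\{d(\cdot,y)\le 5r\}$ and $l_{f^*}(y,r)$ is an infimum over the entire exterior $\{d(\cdot,y)\ge r\}$, whereas Proposition \ref{prop:spehere uniform conv} (plus strict convergence of the blow-ups to the affine map $Lz$) only gives uniform control of $f^*_{r_j}$ on selected spheres $\partial B(w,s)$, $\partial B(w,5s)$, and no pointwise control off those spheres. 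So ``$A(\partial B)$ is an ellipse of bounded eccentricity'' does not bound $L/l$: $f^*$ could in principle oscillate inside the ball (inflating $L_{f^*}$), and $l_{f^*}$ could collapse because some point far from $x$ (still in $\{d(\cdot,y_j)\ge r_js\}$) is mapped near $f^*(y_j)$ -- injectivity alone gives no quantitative lower bound. The missing ingredient is exactly the monotonicity estimate \eqref{eq:v1 estimate} established in Step 1: it sandwiches each $f_i^*(y)$, for $y$ inside the ball, between its values at two boundary points in the $\pm v_i$ directions, yielding $\sup_{\overline{B}(y_j,5r_js)}|f^*-f^*(y_j)|\le 4\sup_{\partial B(y_j,5r_js)}|f^*-f^*(y_j)|$, and it propagates the sphere lower bound on $\partial B(y_j,r_js)$ to every exterior point of $\Om$ along the ray through $y_j$, quantified by the transversality constant $\delta$ of Lemma \ref{lem:two cones}. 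This is the crux of the paper's proof of the dichotomy; the issue you single out as hardest (shifted centers versus exceptional spheres) is minor and is settled by taking $y_j:=x+r_jw$ with $w$ a point of a.e.\ convergence of $f^*_{r_j}$.

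A second, smaller inaccuracy: the blow-up must be performed at $|Df|$-a.e.\ point (the whole content of the theorem lives on the singular set), so the correct normalization is $|Df|(B(x,r))/r$, not a generic $\lambda_r$ tied to the absolutely continuous part. The affine full-rank limit is not obtained ``because the singular part is negligible after rescaling and the a.c.\ part has a density'' -- that reasoning only covers $\mathcal L^2$-a.e.\ points and would miss precisely the points that matter. It follows instead from the Lebesgue-point property of $\tfrac{dDf}{d|Df|}$ with respect to $|Df|$, Lemma \ref{lem:Larsen} (which provides a limit measure of mass $1$ and hence upgrades weak* to strict convergence), and the rigidity fact that a gradient measure with constant full-rank polar is a constant multiple of $\mathcal L^2$.
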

\begin{proof}
	Denote standard mollifiers by $\phi_{\delta}$, $\delta>0$,
	and consider the mollifications
	\[
	f_{\delta}:=\phi_{\delta}*f=(\phi_{\delta}*f_1,\phi_{\delta}*f_2)=:(f_{1,\delta},f_{2,\delta}).
	\]
	Denote $\Om_{\delta}:=\{y\in \Om\colon \dist(y,\R^2\setminus \Om)>\delta\}$.
	For every $\delta>0$ we have
	$f_{1,\delta},f_{2,\delta}\in C^{\infty}(\Om_{\delta})$ and
	\[
	\nabla f_{1,\delta}
	=\phi_{\delta}*D f_1
	=\phi_{\delta}*D f_1'+v_1.
	\]
	The analog holds for $f_{2,\delta}$.
	Let $x,y\in \Om_{\delta}$ with $x\neq y$.
	Then $|(y-x)\cdot z|>0$ either for all $z\in C_1\setminus \{0\}$
	or for all $z\in C_2\setminus \{0\}$ by Lemma \ref{lem:two cones}.
	We can assume that in fact
	$(y-x)\cdot z>0$ for all $z\in C_1\setminus \{0\}$.
	Now
	\begin{equation}\label{eq:injectivity}
	\begin{split}
	f_{1,\delta}(y)-f_{1,\delta}(x)
	&=\int_0^{1}\nabla f_{1,\delta}(x+t(y-x))\cdot (y-x)\,dt\\
	&=\int_0^{1}\phi_{\delta}*D f_1'(x+t(y-x))\cdot (y-x)+v_1\cdot (y-x)\,dt\\
	&\ge v_1\cdot (y-x)>0.
	\end{split}
	\end{equation}
	We find a sequence $\delta_j\to 0$ such that
	$f_{\delta_j}\to f^*$ a.e. in $\Om$. For a.e. $x,y\in \Om$,
	if $v_1\cdot (y-x)>0$, then by
	\eqref{eq:injectivity} we have
	\begin{equation}\label{eq:v1 estimate}
	f_{1}^*(y)-f_{1}^*(x)\ge v_1\cdot (y-x)>0.
	\end{equation}
	From the definition of the precise representative \eqref{eq:precise representative},
	this is then true for all $x,y\in \Om$ such that $v_1\cdot(y-x)>0$.
	Similarly considering the other cases $v_1\cdot (y-x) <0$, $v_2\cdot (y-x) >0$,
	and $v_2\cdot (y-x) <0$,
	we conclude that $f^*$ is injective.
	
		Let $x\in \Om$ such that $\tfrac{dDf}{d|Df|}(x)$ has rank two.
	Excluding a $|Df|$-negligible set,
	we can assume that for the $2\times 2$-matrix of full rank $L':=\tfrac{dDf}{d|Df|}(x)$, we have
	the Lebesgue point property
	\begin{equation}\label{eq:Lebesgue for Df}
		\lim_{r\to 0}\,\vint{B(x,r)}\left|\frac{dDf}{d|Df|}-L'\right|\,d|Df|=0.
	\end{equation}
	We can also assume that the conclusion of Lemma \ref{lem:Larsen} holds at $x$ with the
	choice $\kappa=|Df|$.
	To complete the proof, we need to show that
	$h_{f^*}'(x)<\infty$.
	
	Consider the blowups
	\[
	f_r(z):=\frac{f(x+rz)-f_{B(x,r)}}{|Df|(B(x,r))/r},\quad z\in B(0,1),
	\]
	for all $r>0$ such that $B(x,r)\subset \Om$.
	Note that $(f_r)^*=(f^*)_r$, and so we can simply write $f_r^*$.
	For such $r$, just as in \eqref{eq:kappa def} we also define
	\[
	[Df]_r(A):=\frac{Df(x+rA)}{|Df|(B(x,r))}, \quad A\subset B(0,1).
	\]
	Then $Df_r=[Df]_r$ and $|Df_r|=[|Df|]_r$.
	Thus $|Df_r|(B(0,1))=1$.
	Then by the Poincar\'e inequality,
	the quantities $\Vert f_r\Vert_{L^1(B(0,1))}$ are also uniformly bounded.
	Thus by BV compactness, see e.g. \cite[Theorem 3.23]{AFP}, passing to a subsequence $r_j\to 0$
	we have $f_{r_j}\to h\in \BV(0,1)$ in $L^1(B(0,1))$ and also $Df_{r_j}\overset{*}{\rightharpoondown}Dh$,
	and by Lemma \ref{lem:Larsen}, passing to a further subsequence (not relabeled) we have
	$|Df_{r_j}|\overset{*}{\rightharpoondown}\mu$ with  $\mu(B(0,1))=1$.
	We have $[Df]_{r_j}-L'[|Df|]_{r_j}\overset{*}{\rightharpoondown} 0$ by \eqref{eq:Lebesgue for Df},
	and so $Df_{r_j}=[Df]_{r_j}\overset{*}{\rightharpoondown} L'\mu$.
	Thus $Dh=L'\mu$ with $|Dh|=|L'|\mu=\mu$, and then it is easy to see that
	$Dh=cL'\,d\mathcal L^2$ for some $c>0$, see e.g. \cite[Lemma 1.4]{DLe}. Thus $h$
	is a linear mapping of full rank, and
	\[
	|Dh|(B(0,1))=|L'|\mu(B(0,1))=1,
	\]
	and so in fact
	$f_{r_j}\to h$ strictly in $\BV(B(0,1);\R^2)$.
	Denoting $f_{r_j}=(f_{1,r_j},f_{2,r_j})$,
	using e.g. \cite[Proposition 3.15]{AFP} we have also that $f_{1,r_j}\to h_1$
	and $f_{2,r_j}\to h_2$ strictly in $\BV(B(0,1))$.
	Moreover, $h(z)=Lz$, with $L:=cL'$.
	Note also that $L=(V_1,V_2)$ where $V_1\in C_1\setminus \{0\}$ and
	$V_2\in C_2\setminus \{0\}$.
	
	By passing to a further subsequence (not relabeled),
	we can assume that $f_{r_j}^*\to h$ a.e. in $B(0,1)$.
	Choose $w\in B(0,1/20)$ with $f_{r_j}^*(w)\to h(w)$.
	We find $1/2<R<19/20$ such that $|Dh|(\partial B(w,R))=0$, and then 
	also $f_{1,r_j}\to h_1$ and $f_{2,r_j}\to h_2$
	strictly in $B(w,R)$ (see \cite[Example 1.63]{AFP}).
	Passing to a further subsequence (not relabeled),
	by Proposition \ref{prop:spehere uniform conv}
	we find $s\in (1/20,1/10)$ such that
	$f_{r_j}^*\to h$ uniformly in
	$\partial B(w,s)$ and in
	$\partial B(w,5s)$.
	Let $\delta>0$ be the number from Lemma \ref{lem:two cones}.
	Choose $\eps>0$ sufficiently small that
	\begin{equation}\label{eq:choice of eps}
	\frac{5\Vert L\Vert_{\textrm{max}}s+2\eps}{\delta\min\{|V_1|,|V_2|\} s-2\eps}
	\le \frac{6\Vert L\Vert_{\textrm{max}}}{\delta\min\{|V_1|,|V_2|\}}
	\end{equation}
	By the uniform convergence, for large $j$ we have
	\begin{equation}\label{eq:uniform conv sphere}
	\sup_{z\in \partial B(w,s)}|f_{r_j}^*(z)-Lz|<\eps,
	\quad
	\sup_{z\in \partial B(w,5s)}|f_{r_j}^*(z)-Lz|<\eps,
	\quad \textrm{and}\quad
	|f_{r_j}^*(w)- Lw|<\eps.
	\end{equation}
	
	Denote $y_j:=x+r_j w$.
	Let $y\in B(y_j,5r_{j}s)$.
	Let $y'_1,y''_1$ be two points on $\partial B(y_j,5r_{j}s)$
	that are in the $v_1$- and $-v_1$-directions from $y$, respectively.
	Then by \eqref{eq:v1 estimate}, we have
	\[
	f_1^*(y''_1) \le f_1^*(y) \le  f_1^*(y'_1).
	\]
	The analogous fact holds for $f_2^*(y)$, with points $y'_2,y''_2$.
	Now
	\begin{align*}
	&|f^*(y)-f^*(y_j)|
	\le |f_1^*(y)-f_1^*(y_j)|+|f_2^*(y)-f_2^*(y_j)|\\
	&\qquad \le |f_1^*(y'_1)-f_1^*(y_j)|+|f_1^*(y''_1)-f_1^*(y_j)|
	+|f_2^*(y'_2)-f_2^*(y_j)|+|f_2^*(y''_2)-f_2^*(y_j)|\\
	&\qquad \le |f^*(y'_1)-f^*(y_j)|+|f^*(y''_1)-f^*(y_j)|
	+|f^*(y'_2)-f^*(y_j)|+|f^*(y''_2)-f^*(y_j)|.
	\end{align*}
	Thus for large $j$,
	\begin{equation}\label{eq:monotonicity 1}
	\begin{split}
	\sup_{y\in \overline{B}(y_j,5r_{j}s)}|f^*(y)-f^*(y_j)|
	&\le 4\sup_{y\in \partial B(y_j,5r_{j}s)}|f^*(y)-f^*(y_j)|\\
	&= 4\frac{|Df|(B(x,r_j))}{r_j}\sup_{z\in \partial B(w,5s)}|f_{r_j}^*(z)-f_{r_j}^*(w)|\\
	&\le  4\frac{|Df|(B(x,r_j))}{r_j}\left(\sup_{z\in \partial B(w,5s)}|Lz-Lw|+2\eps\right)
	\quad\textrm{by }\eqref{eq:uniform conv sphere}\\
	&\le  4\frac{|Df|(B(x,r_j))}{r_j}\left(5\Vert L\Vert_{\textrm{max}}s+2\eps\right).
	\end{split}
	\end{equation}
	
	Then let $y\in \Om\setminus B(y_j,r_{j}s)$.
	By Lemma \ref{lem:two cones}, we can assume that
	$|(y-y_j)\cdot v|\ge \delta|y-y_j||v|$ for all $v\in C_1$.
	We can further assume that $(y-y_j)\cdot v>0$ for all $v\in C_1$.
	Now $(y-y_j)\cdot v_1>0$ and
	$|(y-y_j)\cdot V_1|\ge \delta|y-y_j||V_1|$.
	Let $y'\in \partial B(y_j,r_{j}s)$ on the line intersecting
	$y_j$ and $y$. 
	Then also
	\begin{equation}\label{eq:delta s V}
	\begin{split}
	|((y'-x)/r_j-w)\cdot V_1|
	& =|((y'-x)/r_j-(y_j-x)/r_j)\cdot V_1| \\
	&\ge \delta |(y'-y_j)/r_j||V_1|\\
	&=\delta s|V_1|.
	\end{split}
	\end{equation}
	Thus
	\begin{align*}
		|f^*(y)-f^*(y_j)|
		&\ge |f_1^*(y)-f_1^*(y_j)|\\
		&\ge |f_1^*(y')-f_1^*(y_j)|\quad\textrm{by }\eqref{eq:v1 estimate}\\
		&= \frac{|Df|(B(x,r_j))}{r_j}|f_{1,r_j}^*((y'-x)/r_j)-f_{1,r_j}^*(w)|\\
		&\ge \frac{|Df|(B(x,r_j))}{r_j}\left(
		|V_1\cdot (y'-x)/r_j-V_1\cdot w|-2\eps\right)
		\quad\textrm{by }\eqref{eq:uniform conv sphere}\\
		&\ge \frac{|Df|(B(x,r_j))}{r_j}\left(\delta|V_1| s-2\eps\right)
		\quad\textrm{by }\eqref{eq:delta s V}\\
		&\ge \frac{|Df|(B(x,r_j))}{r_j}\left(\delta\min\{|V_1|,|V_2|\} s-2\eps\right).
	\end{align*}
	Hence
	\begin{equation}\label{eq:monotonicity 2}
	\inf_{y\in \Om\setminus B(y_j,r_{j} s)}|f^*(y)-f^*(y_j)|
	\ge \frac{|Df|(B(x,r_j))}{r_j}\left(\delta\min\{|V_1|,|V_2|\} s-2\eps\right).
	\end{equation}
	Using \eqref{eq:monotonicity 1} and \eqref{eq:monotonicity 2}, we get
	\begin{align*}
		\liminf_{j\to \infty} \frac{L_{f^*}(y_j,5r_js)}{l_{f^*}(y_j,r_js)}
		&\le \liminf_{j\to\infty}\frac{\sup_{y\in \overline{B}(y_j,5r_{j}s)}|f^*(y)-f^*(y_j)|}
		{\inf_{y\in \Om\setminus B(y_j,r_{j} s)}|f^*(y)-f^*(y_j)|}\\
		&\le 4\frac{5\Vert L\Vert_{\textrm{max}}s+2\eps}{\delta\min\{|V_1|,|V_2|\} s-2\eps}\\
		&\le 4\frac{6\Vert L\Vert_{\textrm{max}}}{\delta\min\{|V_1|,|V_2|\}}\quad\textrm{by }\eqref{eq:choice of eps}\\
		&<\infty,
	\end{align*}
	and so $h_{f^*}'(x)<\infty$.
\end{proof}

\section{Proof of Theorem \ref{thm:main metric space}}\label{sec:main metric}

In this section we prove Theorem \ref{thm:main metric space}
and then Corollary \ref{cor:Alberti}.

We start with the following lemma which is a variant of
\cite[Lemma 3.23]{LahGFQ}.
The oscillation of
$f\colon U\to \R$ in a set $U\subset \R$ is defined by
\[
\underset{U}{\osc}\,f:=\sup\{|f(x)-f(y)|,\,x,y\in U\}.
\]

\begin{lemma}\label{lem:line estimate}
	Let  $f\colon [a,b]\to \R$ for some compact interval $[a,b]\subset \R$.
	Suppose that there is a sequence of at most countable unions of sets
	$W_i=\bigcup_{j}U_{i,j}$, $i\in\N$,
	where each $U_{i,j}\subset \R$ is open and bounded, 
	such that $\ch_{W_i}(t)\to 1$ as $i\to\infty$ for all $t\in [a,b]$.
	Let $A\subset [a,b]$ with $\mathcal L^1([a,b]\setminus A)=0$.
	Then for every $t_1,t_2\in A$ with $t_1<t_2$, we have
	\[
	|f(t_1)-f(t_2)|\le \liminf_{i\to\infty}\sum_j\underset{U_{i,j}\cap A}{\osc}\, f.
	\]
\end{lemma}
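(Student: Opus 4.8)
The plan is to bound $|f(t_1)-f(t_2)|$ by a telescoping chain of oscillations along $[t_1,t_2]$: at a fixed large index $i$ we control oscillations of $f$ (along $A$) on the part of $[t_1,t_2]$ that lies in $W_i$ using the sets $U_{i,j}$, and we treat the complementary ``bad set'' $K_i:=[t_1,t_2]\setminus W_i$ by a separate limiting argument as $i\to\infty$.

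\emph{Reductions.} We may assume $\ell:=\liminf_i\sigma_i<\infty$, where $\sigma_i:=\sum_j\osc_{U_{i,j}\cap A}f$, since otherwise there is nothing to prove; passing to a subsequence (not relabeled) we arrange $\sigma_i\to\ell$, so the $\sigma_i$ are uniformly bounded. Since $0\le\ch_{W_i}\le 1$ and $\ch_{W_i}(t)\to 1$ for every $t\in[a,b]$, dominated convergence gives $\mathcal L^1([a,b]\setminus W_i)\to 0$, while applying the pointwise convergence at the two points $t_1$ and $t_2$ yields $t_1,t_2\in W_i$ for all large $i$; also $A$ is dense in $[a,b]$, being of full measure.

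\emph{The chaining.} Fix a large $i$ with $t_1,t_2\in W_i$. The set $K_i$ is compact with $\mathcal L^1(K_i)\to 0$, so we may cover it by finitely many open intervals $B_1,\dots,B_r$ of total length $<\mathcal L^1(K_i)+1/i$, with each $B_m$ as short as we wish. Then $\{U_{i,j}\}_j\cup\{B_m\}_m$ is an open cover of the compact interval $[t_1,t_2]$; by a Lebesgue number argument we split $[t_1,t_2]$ into consecutive closed subintervals each contained in a single member of the cover, obtaining a finite chain $V_1,\dots,V_p$ of members with $t_1\in V_1$, $t_2\in V_p$, and $V_k\cap V_{k+1}$ a nonempty open set for each $k$; taking $p$ minimal makes the $V_k$ distinct. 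Choosing $\xi_k\in V_k\cap V_{k+1}\cap A$ (possible since $A$ is dense) and telescoping along $t_1,\xi_1,\dots,\xi_{p-1},t_2\in A$ gives
\[
|f(t_1)-f(t_2)|\le\sum_{k=1}^{p}\osc_{V_k\cap A}f\le\sigma_i+\lambda_i,\qquad \lambda_i:=\sum_{m=1}^{r}\osc_{B_m\cap A}f,
\]
since the distinct $V_k$ lying among the $U_{i,j}$ contribute at most $\sigma_i$ and the remaining ones are among the $B_m$.

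\emph{The obstacle.} Letting $i\to\infty$ and using $\sigma_i\to\ell$, the theorem follows once we show $\lambda_i\to 0$, and \textbf{this is the crux of the argument}: $f$ is completely arbitrary, so $\osc_{B_m\cap A}f$ is not controlled by the length of $B_m$ alone. The resolution must exploit that $\ch_{W_i}(t)\to 1$ at \emph{every} point $t$, not merely almost everywhere: this forces every point $t$ at which $f$ has positive local oscillation along $A$ — say $t$ with $\lim_{\delta\to0}\osc_{(t-\delta,t+\delta)\cap A}f\ge\eta$ for a given $\eta>0$ — to lie in $W_i$ for all large $i$, hence to be covered by some $U_{i,j}$ whose oscillation is then counted in $\sigma_i$ rather than in $\lambda_i$. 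One expects to show, using boundedness of $\sigma_i$, that the set of such points is finite for each $\eta>0$; then for large $i$ the bad set $K_i$ is disjoint from it, the bridges $B_m$ may be chosen to avoid a neighborhood of it, and on the remaining part — a compact set on which the local oscillation of $f$ along $A$ is everywhere $<\eta$ — shrinking the bridge lengths drives $\limsup_i\lambda_i$ below $\eta$; letting $\eta\to0$ gives $\lambda_i\to0$. Since the subsequence realizing $\ell$ was arbitrary, we conclude $|f(t_1)-f(t_2)|\le\liminf_i\sum_j\osc_{U_{i,j}\cap A}f$. This last step — the interplay between the everywhere pointwise convergence of $\ch_{W_i}$ and the finiteness of the oscillation sums — is the technical heart, to be carried out in the spirit of \cite[Lemma 3.23]{LahGFQ}.
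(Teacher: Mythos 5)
Your chaining step is sound: covering $[t_1,t_2]$ by the sets $U_{i,j}$ together with finitely many short bridges $B_m$ over $K_i=[t_1,t_2]\setminus W_i$ and telescoping through points of $A$ does give $|f(t_1)-f(t_2)|\le\sigma_i+\lambda_i$. The genuine gap is exactly the step you flag as the crux, namely $\lambda_i\to 0$, and the route you sketch for it cannot be carried out. First, finiteness of $S_\eta:=\{t\colon \lim_{\delta\to0}\osc_{(t-\delta,t+\delta)\cap A}f\ge\eta\}$ does not follow from boundedness of $\sigma_i$: oscillation is not additive over points, so infinitely many points of $S_\eta$ can lie in a single $U_{i,j}$, which contributes only one term. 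For instance $f=\ch_{\Q}$, $A=[a,b]$, and $W_i=U_{i,1}$ a single bounded interval containing $[a,b]$ satisfy all the hypotheses with $\sigma_i\equiv 1$, while $S_1=[a,b]$. Second, even granting $K_i\cap S_\eta=\emptyset$, the conclusion $\limsup_i\lambda_i<\eta$ does not follow: the number $r$ of bridges is unbounded, so knowing each bridge has oscillation $<\eta$ only gives $\lambda_i<r\eta$; and smallness of the pointwise local oscillation does not control oscillation sums over short covers at all --- the Cantor function has zero local oscillation at every point, yet the oscillations over any finite family of intervals covering the Cantor set sum to at least $1$. So the bad set cannot be disposed of by pointwise oscillation estimates; some measure-theoretic structure on $f$ is needed, and your argument does not produce it.

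That structure is exactly what the paper's proof builds, and it is the idea missing from your proposal. After truncating so that $f$ is bounded, one passes to the increasing envelope $h(x):=\sup_{t\in[t_1,x]\cap A}f(t)$ (extended constantly outside $[t_1,t_2]$), which is a locally BV function, and one shows for every bounded open $U=\bigcup_l(a_l,b_l)$ that $|Dh|(U)\le\osc_{U\cap A}f$, using that the value intervals $(h(a_l+),h(b_l-))$ are pairwise disjoint. The everywhere-pointwise convergence $\ch_{W_i}\to1$ is then used only through the increasing Borel sets $H_i:=\bigcap_{k\ge i}W_k\uparrow[a,b]$ and continuity from below of the measure $|Dh|$:
\[
|f(t_2)-f(t_1)|\le h(t_2)-h(t_1)=|Dh|([t_1,t_2])\le\liminf_{i\to\infty}\sum_j|Dh|(U_{i,j}\cap[t_1,t_2])\le\liminf_{i\to\infty}\sigma_i.
\]
In other words, the exceptional set is absorbed by measure continuity for $|Dh|$ rather than estimated by a vanishing correction term; without such a device your $\lambda_i$ need not tend to zero, and the proof does not close.
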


\begin{proof}
	Let $t_1,t_2\in A$ with $t_1<t_2$.
	First assume that $f$ is bounded.
	Then, without loss of generality, we can also assume that $f(t_1)<f(t_2)$.
	Define
	\[
	h(x):=\sup_{t\in [t_1, x]\cap A}f(t),\quad t_1\le x\le t_2,
	\]
	and let $h(x):=h(t_1)$ for $x\le t_1$ and $h(x):=h(t_2)$ for $x\ge t_2$.
	Now $h$ is an increasing function, and so $h\in\BV_{\loc}(\R)$.
	Consider a bounded nonempty open set $U\subset \R$.
	We can represent $U$ as an at most countable union of disjoint open intervals $U=\bigcup_{l}U_l$
	with $U_l=(a_l,b_l)$.
	Since $h$ is increasing, in each interval we can consider the one-sided limits
	$h(a_l+)$ and $h(b_l-)$.
	Moreover, in each interval $U_l$, define the truncation
	\[
	f_l:=\min\{h(b_l-),\max\{h(a_l+),f\}\}.
	\]
	Since the intervals $(h(a_l+),h(b_l-))$ are disjoint, we have
	\[
	\sum_{l}\osc_{U_l\cap A} f_l\le \osc_{U\cap A} f.
	\]
	Since $h$ is increasing, for every interval $U_l$ we clearly  have
	\[
	|Dh|(U_l)=h(b_l-)-h(a_l+).
	\]
	On the other hand, we also have (in fact equality holds)
	\begin{align*}
		\osc_{U_l\cap A} f_l
		\ge h(b_l-)-h(a_l+),
	\end{align*}
	because $\inf_{U_l\cap A}f\le h(a_l+)$, and
	either $h(b_l-)=h(a_l+)$ or $h(b_l-)=\sup\{f(y)\colon y\in U_l\cap A\}$.
	Then since $|Dh|$ is a Radon measure, we have
	\begin{equation}\label{eq:osc h}
		|Dh|(U)=\sum_{l}|Dh|(U_{l})
		\le \sum_{l}\osc_{U_l\cap A} f_l\le \osc_{U\cap A} f.
	\end{equation}
	Let $H_i:=\bigcap_{k=i}^{\infty}\bigcup_{j=1}^{\infty}U_{k,j}$.
	Then by assumption, $\bigcup_{i=1}^{\infty}H_i=[a,b]$.
	Noting that $f(t_1)=h(t_1)$, and then using basic properties of the Radon measure $|Dh|$, we estimate
	\begin{align*}
		|f(t_2)-f(t_1)|
		&\le h(t_2)-h(t_1)\\
		&=  |Dh|([t_1,t_2])\\
		&\le \liminf_{i\to\infty}|Dh|(H_i\cap [t_1,t_2])\\
		&\le \liminf_{i\to\infty}\sum_j|Dh|(U_{i,j}\cap [t_1,t_2])\\
		&\le \liminf_{i\to\infty}\sum_j\underset{U_{i,j}\cap A}{\osc}\, f\quad
		\textrm{by }\eqref{eq:osc h}.
	\end{align*}
	
	In the general case, we consider the truncations $f_M:=\min\{M,\max\{-M,f\}\}$.
	For sufficiently large $M$, we have
	\[
	|f(a)-f(b)|
	=|f_M(a)-f_M(b)|
	\le \liminf_{i\to\infty}\sum_j\underset{U_{i,j}\cap A}{\osc}\, f_M
	\le \liminf_{i\to\infty}\sum_j\underset{U_{i,j}\cap A}{\osc}\, f.
	\]
\end{proof}

\begin{lemma}\label{lem:line behavior}
	Let  $f\colon [a,b]\to Y$ for some compact interval $[a,b]\subset \R$.
	Suppose that there is a sequence of at most countable unions of sets
	$W_i=\bigcup_{j}U_{i,j}$, $i\in\N$,
	where each $U_{i,j}\subset \R$ is open and bounded, 
	such that $\ch_{W_i}(t)\to 1$ as $i\to\infty$ for all $t\in [a,b]$.
	Let $A\subset [a,b]$ with $\mathcal L^1([a,b]\setminus A)=0$.
	Then for every $t_1,t_2\in A$ with $t_1<t_2$, we have
	\[
	d_Y(f(t_1),f(t_2))\le \liminf_{i\to\infty}\sum_j \diam f(U_{i,j}\cap A).
	\]
\end{lemma}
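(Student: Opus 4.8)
The plan is to deduce this metric-space-valued statement from the real-valued Lemma \ref{lem:line estimate} by post-composing $f$ with a suitable norming linear functional. Recall from Section \ref{sec:prelis} that $Y$ is isometrically embedded in a Banach space $(V,\Vert\cdot\Vert_V)$, so that $d_Y(f(t_1),f(t_2))=\Vert f(t_1)-f(t_2)\Vert_V$; moreover, since the embedding is isometric, $\diam f(U_{i,j}\cap A)$ has the same value whether computed in $Y$ or in $V$, and likewise $d_Y(f(x),f(y))=\Vert f(x)-f(y)\Vert_V$ for all $x,y$.

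First I would dispose of the trivial case $f(t_1)=f(t_2)$, where the left-hand side vanishes and there is nothing to prove. Otherwise, by the Hahn--Banach theorem we can choose $\phi\in V^*$ with $\Vert\phi\Vert_{V^*}\le 1$ and $\phi(f(t_1)-f(t_2))=\Vert f(t_1)-f(t_2)\Vert_V=d_Y(f(t_1),f(t_2))$. Set $g:=\phi\circ f\colon[a,b]\to\R$. This is an arbitrary real-valued function on $[a,b]$, but Lemma \ref{lem:line estimate} is stated for exactly such functions (its proof reduces the unbounded case to the bounded one by truncation), and the hypotheses there on the sets $W_i=\bigcup_j U_{i,j}$ and on $A$ are precisely those assumed here. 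Applying it to $g$ therefore gives
\[
d_Y(f(t_1),f(t_2))=|g(t_1)-g(t_2)|\le\liminf_{i\to\infty}\sum_j\underset{U_{i,j}\cap A}{\osc}\,g.
\]

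It remains to bound each summand $\underset{U_{i,j}\cap A}{\osc}\,g$ by $\diam f(U_{i,j}\cap A)$: for any $x,y\in U_{i,j}\cap A$ one has $|g(x)-g(y)|=|\phi(f(x)-f(y))|\le\Vert f(x)-f(y)\Vert_V=d_Y(f(x),f(y))$, and taking the supremum over such $x,y$ yields $\underset{U_{i,j}\cap A}{\osc}\,g\le\diam f(U_{i,j}\cap A)$ (with the usual convention that both sides are $0$ when $U_{i,j}\cap A=\emptyset$). Substituting this into the displayed estimate gives the claim. I do not expect a genuine obstacle here; the only points needing a little care are that Lemma \ref{lem:line estimate} already applies to arbitrary real functions and that passing through the fixed isometric embedding of $Y$ into $V$ leaves the relevant distances and diameters unchanged, both of which are immediate.
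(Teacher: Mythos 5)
Your proposal is correct and follows essentially the same route as the paper: a norming functional from the dual of the ambient Banach space (Hahn--Banach), reduction to the real-valued Lemma \ref{lem:line estimate} via $g=\phi\circ f$, and the bound $\osc_{U_{i,j}\cap A} g\le \diam f(U_{i,j}\cap A)$. The only cosmetic difference is your separate treatment of the trivial case $f(t_1)=f(t_2)$, which the paper absorbs by directly taking a norm-attaining functional.
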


\begin{proof}
	Recall that we understand $Y$ to be isometrically embedded into the Banach space
	$(V,\Vert \cdot\Vert_{V})$.
	Denote the dual of $V$ by $V^*$.
	We find $v^*\in V^*$ with $\Vert v^*\Vert_{V^*}=1$ and
	$v^*(f(t_1)-f(t_2))=\Vert f(t_1)-f(t_2)\Vert_V$.
	Now we can consider the real-valued function $v^*\circ f$, and we get
	\begin{align*}
		d_Y(f(t_1),f(t_2))
		&=\Vert f(t_1)-f(t_2)\Vert_V\\
		&= v^*(f(t_1)-f(t_2))\\
		&= v^*(f(t_1))-v^*(f(t_2))\\
		&\le \liminf_{i\to\infty}\sum_j
		\underset{U_{i,j}\cap A}{\osc}\,v^*\circ f\quad\textrm{by Lemma }
		\ref{lem:line estimate}\\
		&\le \liminf_{i\to\infty}\sum_j\diam f(U_{i,j}\cap A).
	\end{align*}
\end{proof}

Observe that if $f\colon\Om\to Y$ is injective, and $B(y_1,r_1)\subset \Om$ and
$B(y_2,r_2)\subset \Om$ are disjoint, then
from the definition of $l_f(\cdot,\cdot)$ and from the injectivity of $f$ it follows that
\begin{equation}\label{eq:injectivity for balls}
	B(f(y_1),l_f(y_1,r_1))\cap B(f(y_2),l_f(y_2,r_2)) = \emptyset.
\end{equation}

\begin{proof}[Proof of Theorem \ref{thm:main metric space}]
	Since $f$ is bounded, there exist $y_0\in Y$ and $0<R<2\diam Y$ such that
	$f(\Om)\subset B(y_0,R)$.
There exists a $\mu$-negligible set $H\subset \Om$ such that $\Vert Df\Vert^s(\Om\setminus H)=0$.
Note that $\{h'_{f}<\infty\}=\bigcup_{M=1}^{\infty}\{h'_{f}<M\}$.
Fix $M\in\N$ and $\eps>0$.
Also choose an open set $U$ such that $H\subset U\subset \Om$
and $\mu(U)<\eps^{Q/(Q-1)}$.
For every
$x\in H\cap \{h'_{f}<M\}$
we have that
\[
\liminf_{r\to 0,\,d(y,x)<r}\frac{L_f(y,5r)}
{l_f(y,r)}<M.
\]
Thus we find $0<r_x<\eps$ and $y_x\in B(x,r_x)$ such that
$\Om\setminus B(y_x,r_x)\neq \emptyset$ (we can assume that $\Om$ consists of at least 2 points),
$B(y_x,r_x)\subset U$, $B(y_x,6r_x)\subset \Om$, and
\[
\frac{L_f(y_x,5r_x)}{l_f(y_x,r_x)}<M.
\]
By the $5$-covering theorem, we find finitely or countably many pairwise disjoint balls
$B_{j}=B(y_{j},r_{j})$
such that the balls $B(y_{j},5r_{j})$ cover $H\cap \{h_f'<M\}$, with
\begin{equation}\label{eq:final M estimate}
\frac{L_f(y_j,5r_j)}
{l_f(y_j,r_j)}<M.
\end{equation}
Let
\begin{equation}\label{eq:g def}
g:=\sum_{j}\frac{L_{f}(y_{j},5r_{j})}{r_{j}}\ch_{6B_{j}}.
\end{equation}
Then for a curve $\gamma\colon [0,\ell_{\gamma}]\to \Om$ with $\ell_{\gamma}>\eps$,
we have
\begin{equation}\label{eq:curve integral}
\sum_{j,\,5B_{j}\cap \gamma\neq \emptyset} \diam f (5B_j)
\le  2\sum_{j,\,5B_{j}\cap \gamma\neq \emptyset} L_{f}(y_{j},5r_{j})
\le 2\int_\gamma g\,ds.
\end{equation}
With $1/Q+(Q-1)/Q=1$,
by Young's inequality we have for any $b_1,b_2\ge 0$ that
\begin{equation}\label{eq:generalized Young}
	\begin{split}
		b_1b_2=\eps^{1/Q} b_1 \eps^{-1/Q} b_2
		&\le \frac{1}{Q}\eps b_1^Q + \frac{Q-1}{Q}\eps^{-1/(Q-1)} b_2^{Q/(Q-1)}\\
		&\le \eps b_1^Q + \eps^{-1/(Q-1)} b_2^{Q/(Q-1)}.
	\end{split}
\end{equation}
Denote $l_j:=l_f(y_j,r_j)$.
Ahlfors regularity holds with the exponent $Q$ and with a constant $C_A$ in both $X$ and $Y$.
Using the Ahlfors regularity, we estimate
\begin{align*}
	&\sum_{j}\frac{L_{f}(y_{j},5r_{j})}{r_{j}} \mu(6 B_{j})\\
	& \qquad  \le 6^QC_A\sum_{j}L_{f}(y_{j},5r_{j}) r_{j}^{Q-1}\\
	& \qquad  \le 6^QC_A M\sum_{j}l_{j} r_{j}^{Q-1}\quad\textrm{by }
	\eqref{eq:final M estimate}\\
	& \qquad   \le  6^{Q^2}C_A^Q M^Q\eps
	 \sum_{j}  l_{j}^Q+ \eps^{-1/(Q-1)}
	\sum_{j} r_{j}^{Q}\quad\textrm{by }\eqref{eq:generalized Young}\\
	& \qquad   =6^{Q^2}C_A^{Q+1} M^Q\eps\sum_{j}  \nu(B(f(y_{j}),l_{j}))
	+C_A\eps^{-1/(Q-1)}\sum_{j} \mu(B_j).
\end{align*}
Note that since $f(\Om)\subset B(y_0,R)$ and $\Om\setminus B(y_{j},r_{j})\neq \emptyset$,
we have $l_{j}< 2R$.
Using injectivity and \eqref{eq:injectivity for balls}, we estimate further
\begin{align*}
	\sum_{j}\frac{L_f(y_{j},5r_{j})}{r_{j}} \mu(6 B_{j})
	&\le 6^{Q^2}C_A^{Q+1} M^Q\eps\nu(B(y_0,3R))
	+C_A\eps^{-1/(Q-1)} \mu(U)\\
		&\le 6^{Q^2}C_A^{Q+1} M^Q\eps\nu(B(y_0,3R))
	+C_A \eps.
\end{align*}
We can do this for $\eps=1/i$, $i=1,2,\ldots$.
Instead of e.g. $y_{j}$, we now denote $y_{i,j}$ to signify also the dependence on $i$.
Using the definition \eqref{eq:g def}, we get a sequence of functions $\{g_i\}_{i=1}^{\infty}$.
For every $i\in\N$, we have
\begin{equation}\label{eq:gj estimate with sum}
	\begin{split}
		\int_{\Om}g_i\,d\mu
		&\le \sum_{j}\frac{L_f(y_{i,j},5r_{i,j})}{r_{i,j}} \mu(6 B_{i,j})\\
		& \le 6^{Q^2}C_A^{Q+1} M^Q(1/i) \nu(B(y_0,3R))+C_A/i\\
		& \to 0\quad\textrm{as }i\to\infty.
	\end{split}
\end{equation}
Recall that we are keeping $M\in\N$ fixed.
Denote
\[
H_M:=\Om\cap \bigcap_{i=1}^{\infty}\bigcup_{j}5B_{i,j}.
\]
Note that $H_M$ is a Borel set that contains $H\cap \{h_f'<M\}$.
Fix $\delta>0$.
Using the measure property of $\Vert Df\Vert$, we get the following
(see e.g. \cite[Proposition 3.3.37]{HKSTbook}):
there is an open set $W\supset H_M$ with $\Vert Df\Vert(W\setminus H_M)<\delta$, and
there is a closed set $F\subset H_M$ with $\Vert Df\Vert(H_M\setminus F)<\delta$.
From the definition of BV mappings (Definition \ref{def:BV def}),
there exists a sequence of
nonnegative functions $\{h_i\}_{i=1}^{\infty}$ such that
for every curve $\gamma$ in $W\setminus F$ outside a family $\Gamma$
with $\AM(\Gamma)=0$, we have
\begin{equation}\label{eq:AM ae curve}
	d_Y(f(\gamma(t_1)),f(\gamma(t_2)))
	\le \liminf_{i\to\infty}\int_{\gamma|_{[t_1,t_2]}}h_i\,ds
\end{equation}
for almost every $t_1,t_2\in [0,\ell_{\gamma}]$ with $t_1<t_2$,
and 
\begin{equation}\label{eq:hi choice}
\int_{W\setminus F} h_i\,d\mu
<\Vert Df\Vert(W\setminus F)+\delta<3\delta\quad\textrm{for every }i\in\N.
\end{equation}
We extend each $h_i$ to $W$ by letting $h_i=0$ in $F$.
Then consider a curve $\gamma\colon [0,\ell_{\gamma}]\to W$; excluding an $\AM$-negligible
family we can assume that
no subcurve of $\gamma$ is in $\Gamma$, as follows readily from the definition of $\AM$-modulus.
Note that $(0,\ell_{\gamma})\setminus \gamma^{-1}(F)$
is an open set, and so it is the
union of pairwise disjoint open intervals $\bigcup_{m=1}^{\infty}V_m$.
By \eqref{eq:AM ae curve},
there exists $A\subset [0,\ell_{\gamma}]$ with $\mathcal L^1([a,b]\setminus A)=0$
and for every $t_1,t_2\in V_m\cap A$ with $t_1<t_2$, we have
\begin{align*}
	\diam f\circ\gamma(V_{m}\cap A)
	&=\sup_{t_1,t_2\in V_{m}\cap A,\,t_1<t_2}d_Y(f(\gamma(t_1)),f(\gamma(t_2)))\\
	&\le \sup_{t_1,t_2\in V_{m}\cap A,\,t_1<t_2}\liminf_{i\to\infty}\int_{\gamma|_{[t_1,t_2]}}h_i\,ds\\
	&\le \liminf_{i\to\infty}\int_{V_m}h_i(\gamma(s))\,ds.
\end{align*}
Let $\delta_{\gamma}>0$.
If
\begin{equation}\label{eq:V im intervals}
\diam f\circ\gamma(V_{m}\cap A)\le \int_{V_{m}}h_i(\gamma(s))\,ds+2^{-m}\delta_{\gamma},
\end{equation}
then let $V_{i,m}=V_m$, and otherwise let $V_{i,m}=\emptyset$.
Note that for every $m$, we have $V_{i,m}=V_m$ for all sufficiently large $i$.
Note also that $\gamma^{-1}(F)$ is covered, for every $i\in\N$, by the open
sets $\gamma^{-1}(5B_{i,j})$.
Now by Lemma \ref{lem:line behavior}, for every $t_1,t_2\in A$ with $t_1<t_2$ we get 
\begin{align*}
	&d_Y(f(\gamma(t_1)),f(\gamma(t_2)))\\
	&\qquad \le \liminf_{i\to\infty}\left(\sum_{j,\,5B_{i,j}\cap\gamma\neq \emptyset}\diam f\circ\gamma(\gamma^{-1}(5B_{i,j}))
	+\sum_m\diam f\circ\gamma(V_{i,m}\cap A)\right)\\
	&\qquad \le \liminf_{i\to\infty}\left(\sum_{j,\,5B_{i,j}\cap\gamma\neq \emptyset}\diam f(5B_{i,j})
	+\sum_m\diam f\circ\gamma(V_{i,m}\cap A)\right)\\
	&\qquad \le \liminf_{i\to\infty}\left(2\int_\gamma g_i\,ds+\int_{\gamma}h_i\,ds+\delta_{\gamma}\right)
	\quad\textrm{by }\eqref{eq:curve integral},\eqref{eq:V im intervals}.
\end{align*}
Since $\delta_{\gamma}>0$ was arbitrary, we get
\[
d_Y(f(\gamma(t_1)),f(\gamma(t_2)))
\le \liminf_{i\to\infty}\int_\gamma (2g_i+h_i)\,ds
\]
for every $t_1,t_2\in A$ with $t_1<t_2$.
Now by definition,
\begin{align*}
\Vert Df\Vert(W)
\le \liminf_{i\to\infty}\int_{W}(2g_i+h_i)\,d\mu
\le 3\delta\quad\textrm{by }\eqref{eq:gj estimate with sum},\eqref{eq:hi choice}.
\end{align*}
Thus $\Vert Df\Vert(H_M)\le \Vert Df\Vert(W)\le 3\delta$.
Since $\delta>0$ was arbitrary, we get $\Vert Df\Vert(H_M)=0$
and so $\Vert Df\Vert(H\cap \{h_f'<M\})=0$, and so $\Vert Df\Vert(H\cap \{h_f'<\infty\})=0$.
Thus
\[
\Vert Df\Vert^s(\{h_f'<\infty\})
=\Vert Df\Vert^s(\Om\setminus H)+\Vert Df\Vert^s(H\cap \{h_f'<\infty\})=0,
\]
proving the result.
\end{proof}

\begin{remark}
Note that in Theorem \ref{thm:main metric space}, we could in fact only assume
$f\in D^{\BV}(\Om;Y)$ instead of $f\in \BV(\Om;Y)$, that is, we do not use the assumption
$f\in L^1(\Om;Y)$.
\end{remark}

\begin{proof}[Proof of Corollary \ref{cor:Alberti}]
By Proposition \ref{prop:plane reduction}, it is enough to consider
$f\in\BV(B(0,1);\R^2)$.
By considering truncations, we can also assume that $f\in L^{\infty}(B(0,1);\R^2)$.
There exists a $\mathcal L^2$-negligible set $H\subset B(0,1)$ such that
$|D^s f|(B(0,1)\setminus H)=0$.
Let $D$ be the subset of $H$ where $\tfrac{dDf}{d|Df|}$ has rank $2$.
Let $\eps>0$. For $|Df|$-a.e. $x\in D$, we find cones
$C_1=C(v_1,a_1)$ and $C_2=C(v_2,a_2)$ with
$C_1\cap C_2=\{0\}$ and $-C_1\cap C_2=\{0\}$, and an arbitrarily small radius $r_x>0$ such that
$|Df|(\partial B(x,r_x))=0$, and
\[
|Df|\left(B(x,r_x)\cap D\cap \left\{\frac{dDf_1}{d|Df|}\in C_1\setminus \{0\}\right\}\right)
>(1-\eps/2)|Df|(B(x,r_x))
\]
and similarly for $f_2$.
Note that if $\tfrac{dDf_1}{d|Df|}\in C_1\setminus \{0\}$, then also $\tfrac{dDf_1}{d|Df_1|}\in C_1\setminus \{0\}$.
By the Vitali covering theorem, see e.g. \cite[Theorem 2.19]{AFP},
we can choose  pairwise disjoint balls $B(x_j,r_j)$ covering $|Df|$-almost all of $D$.
Now for each $j\in\N$, we include the index $j$ in the notation, so that there are
disjoint cones $C_{1,j}=C(v_{1,j},a_{1,j})$ and $C_{2,j}=C(v_{2,j},a_{2,j})$.
Let
\[
A_j:=B(x_j,r_j)\cap D\cap \left\{\frac{dDf_{1}}{d|Df|}\in C_{1,j}\setminus \{0\}\right\}
\cap \left\{\frac{dDf_{2}}{d|Df|}\in C_{2,j}\setminus \{0\}\right\}.
\]
Then $|Df|(A_j)\ge (1-\eps)|Df|(B(x_j,r_j))$.

Let $a'_{1,j}<a_{1,j}$ and $a'_{2,j}<a_{2,j}$ such that for
$C_{1,j}':=C(v_{1,j},a'_{1,j})$ and $C_{2,j}'=C(v_{2,j},a_{2,j}')$, we still have
$C_{1,j}'\cap C_{2,j}'=\{0\}$ and $-C_{1,j}'\cap C_{2,j}'=\{0\}$.
Thus by Proposition \ref{prop:standard}, we find
$f'_{1,j},f'_{2,j}\in \BV(B(x_j,r_j))\cap L^{\infty}(B(x_j,r_j))$ such that
\[
|Df|\mres_{A_j}\ll |Df_1|\mres_{A_j}\ll |Df'_{1,j}|\mres_{A_j}\ll |D^s f'_{1,j}|
\]
and similarly 
$|Df|\mres_{A_j}\ll |D^s f'_{2,j}|$ in $B(x_j,r_j)$, and
\[
\frac{dDf_{1,j}'}{d|Df_{1,j}'|}(x)\in C_{1,j}'
 \textrm{ for }|Df_{1,j}'|\textrm{-a.e. }x\in B(x_j,r_j),
\]
and analogously for $f_{2,j}'$.
Let $f_{1,j}(y):=f'_{1,j}(y)+v_{1,j}\cdot y$, $f_{2,j}(y):=f'_{2,j}(y)+v_{2,j}\cdot y$, and $f_j=(f_{1,j},f_{2,j})$.
Then by Theorem \ref{thm:main Euclidean}, $f^*_j$ is injective.
Note that still $|Df|\mres_{A_j}\ll |D^s f_{1,j}|$ and $|Df|\mres_{A_j}\ll |D^s f_{2,j}|$.
Let
\[
d\kappa:=\min\left\{\frac{d|D^s f_{1,j}|}{d|D^s f_{j}|},\frac{d|D^sf_{2,j}|}{d|D^sf_{j}|}\right\}\,d|D^sf_j|.
\]
Note that $|Df|\mres_{A_j}\ll \kappa$.
Now for $\kappa$-a.e. $x\in B(x_j,r_j)$, we have $\tfrac{d|D^sf_{1,j}|}{d|D^sf_{j}|}(x)>0$ and $\tfrac{d|D^sf_{2,j}|}{d|D^sf_{j}|}(x)>0$, and so
$\tfrac{dDf_j}{d|D f_j|}(x)$ has rank two for $\kappa$-a.e. $x\in B(x_j,r_j)$.
For each $j$, by Theorem \ref{thm:main Euclidean} and Theorem \ref{thm:main metric space}
we have for $|D^s f_j|$-a.e. $x\in B(x_j,r_j)$
that $\tfrac{dDf_j}{d|Df_j|}(x)$ has rank one;
this is then true also for $\kappa$-a.e. $x\in B(x_j,r_j)$.
Here we also rely on Proposition \ref{prop:two variation measures}.
Thus $\kappa(B(x_j,r_j))=0$,
and so $|D^s f|(B(x_j,r_j)\cap A_j)=0$.
In total,
\begin{align*}
|Df|(D)
=|D^sf|(D)
\le \sum_j | D^sf|(B(x_j,r_j)\setminus A_j)
\le \eps\sum_j | Df|(B(x_j,r_j))
\le \eps| Df|(\R^n).
\end{align*}
Letting $\eps\to 0$, we get $|Df|(D)=0$.
\end{proof}

\begin{remark}
In Theorem \ref{thm:main Euclidean} we demonstrate a connection between $\tfrac{dDf}{d|Df|}$
having rank one and $h_{f^*}'$ being infinite.
A similar connection was proved in \cite[Theorem 6.3]{LahGFQ}, where it was shown for
$f\in\BV(\R^n;\R^n)$ that at $|Df|$-a.e. point
$\tfrac{dDf}{d|Df|}$ has full rank
if and only if a slightly different variant of 
$h_{f^*}$ is finite. It should be noted that the proof of this result relied on
Alberti's rank one theorem.
In any case, at least heuristically there is a connection between $\tfrac{dDf}{d|Df|}$ having full rank
and $f$ being ``quasiconformal'' at a point.

If $f\colon \R^n\to \R^n$ is a homeomorphism and $h_f(x)\le H<\infty$ for every $x\in\R^n$,
then $f$ is a quasiconformal mapping; this can be taken as a definition of
quasiconformal mappings. Quasiconformal mapping have, in particular,
$W^{1,n}_{\loc}$-regularity.
There has been wide interest in showing that if this pointwise condition is
 assumed in some relaxed sense,
it follows that the mapping $f$ has at least
 some lower regularity.
For $1\le p\le n$, and denoting $p^*=np/(n-p)$ for $1\le p<n$ and $p^*=\infty$ for $p=n$, 
if $f\colon \R^n\to\R^n$ is a homeomorphism,
$h_f\in L^{p^*}_{\loc}(\R^n)$, and $h_f<\infty$ outside a set of
$\sigma$-finite $\mathcal H^{n-1}$-measure, then $f\in W_{\loc}^{1,p}(\R^n;\R^n)$.
For this, see  Koskela--Rogovin \cite[Corollary 1.3]{KoRo}
and Kallunki--Martio \cite[Theorem 2.2]{KaMa}.

So at one extreme, $h_f$ being uniformly bounded means quasiconformality and thus
$W_{\loc}^{1,n}$-regularity; at the other extreme,
$h_f$ or a variant of it being merely finite in a set is enough to guarantee at least that
the singular part of $\Vert Df\Vert$ does not charge this set, as given by 
Theorem \ref{thm:main metric space}.
Or, $f$ is ``non-quasiconformal'' at $\Vert Df\Vert^s$-a.e. point.
So heuristically, the rank one theorem is one extreme end point of this
wider quasiconformal theory.
\end{remark}

\end{document}